\DeclareRobustCommand{\sortandprefix}[3]{#2} 
\newcommand{\equalcontrib}{\footnotemark[\arabic{footnote}]}
\newcommand{\E}{\mathbb{E}}
\newcommand{\calP}{\mathcal{P}}
\newcommand{\calS}{\mathcal{S}}
\newcommand{\calE}{\mathcal{E}}
\newcommand{\calF}{\mathcal{F}}
\renewcommand{\d}{\;\textnormal{d}}
\renewcommand{\Re}{\mathbb{R}}
\newcommand{\KL}{D_{\text{KL}}}
\newcommand{\thetahat}{\widehat{\theta}}
\newcommand{\ind}{\mathbf{1}}
\DeclareMathOperator{\Cauchy}{Cauchy}
\newcommand{\given}{ \ \middle| \ }
\DeclareMathOperator{\Par}{Pareto}
\DeclareMathOperator*{\argmin}{\text{argmin}}
\newtheorem{theorem}{Theorem}[section]
\newtheorem{lemma}[theorem]{Lemma}
\newtheorem{corollary}[theorem]{Corollary}
\newtheorem{example}[theorem]{Example}
\newtheorem{remark}[theorem]{Remark}
\title{Rao-Blackwellized e-variables}
\author[1,3]{Dante de Roos\thanks{Equal contribution.}} 
\author[2]{Ben Chugg\protect\equalcontrib}
\author[1,3]{Peter Gr\"{u}nwald}
\author[2]{Aaditya Ramdas}
{
\affil[1]{\small Centrum Wiskunde \& Informatica, The Netherlands}
\affil[2]{Carnegie Mellon University, USA}
\affil[3]{Leiden University, The Netherlands}
}
\date{\today}
\begin{document}

\maketitle

\begin{abstract}
    We show that for any concave utility, the expected utility of an e-variable can only increase after conditioning on a sufficient statistic. 
    The simplest form of the result has an extremely straightforward proof, which follows from a single application of Jensen's inequality. 
    Similar statements hold for compound e-variables, asymptotic e-variables, and e-processes. These results echo the Rao-Blackwell theorem, which states that the expected squared error of an estimator can only decrease after conditioning on a sufficient statistic. We provide several applications of this insight, including a simplified derivation of the log-optimal e-variable for linear regression with known variance.
\end{abstract}

\section{Introduction}

The Rao-Blackwell theorem~\citep{radhakrishna1945information,blackwell1947conditional} is a famous result in statistical decision theory which states that, for any convex loss function, the expected loss of an estimator can only decrease after conditioning on a sufficient statistic. The act of conditioning a random variable on a sufficient statistic has thus come to be known as ``Rao-Blackwellization.'' 

This short article makes the observation that a Rao-Blackwell type result exists for e-variables, e-processes, and related objects. An e-variable\footnote{The realization of an e-variable is typically referred to as an e-value.
} for a set of distributions $\calP$ is a $[0,\infty]$-valued random variable with expectation at most 1 under every $P\in \calP$. E-processes,
whose precise definition is given further below,
are their sequential analogues. Together, these ``e-statistics''---including their compound and asymptotic relatives---have emerged as useful tools in mathematical statistics and have garnered significant attention over the past five years~\citep{ramdas2025hypothesis,grunwald2024safe}. 

In the context of hypothesis testing, one takes $\calP$ to be the null. Given an e-variable $E$, a level-$\alpha$ test is defined by rejecting the null if $E\geq 1/\alpha$, since type-I error control is guaranteed by Markov's inequality: $P(E\geq 1/\alpha)\leq \E_P[E]\alpha\leq \alpha$ for all $P\in\calP$. The goal is thus to design e-variables for $\calP$ which are large under the alternative. 

In fact, one can often construct e-variables which 
tend to become
exponentially large under the alternative. This suggests maximizing this exponent, 
via maximizing the expectation of
$\log(E)$. This is also known as ``Kelly betting,''~\citep{kelly1956new} and is equivalent to maximizing the geometric growth rate of $E$. Indeed, e-variables which maximize expected log under the alternative are called \emph{growth-rate optimal} (GRO) e-variables by \citet{grunwald2024safe}. This is particularly fitting with the interpretation of e-variables and e-processes as the wealth of a gambler betting against the null~\citep{ramdas2023game}.

Here we prove several results pertaining to the growth of e-statistics under various utility functions:
\begin{enumerate}
    \item Theorem~\ref{thm:rb-evals} shows that the expected utility of an e-variable can only increase under the alternative by conditioning the e-variable on a sufficient statistic. Utility can refer to any concave utility function. 
    \item Theorem~\ref{thm:rb-log-util-general} strengthens this statement in the case of the log utility, allowing an e-variable $E$ and its Rao-Blackwellization to be compared even when $\E[\log(E)]$ does not exist or is infinite.  
    \item Theorem~\ref{thm:rb-eproc} extends these results to e-processes by conditioning on a sufficient filtration. Corollaries~\ref{cor:compound} and \ref{cor:asymptotic} extend them to compound and asymptotic e-values. 
\end{enumerate}
Section~\ref{sec:applications} then uses these results to recover the the log-optimal e-variable in two scenarios: in fixed-design linear regression and when data are drawn from a Pareto distribution. Furthermore, we demonstrate that even under the very mild assumption that the data are i.i.d. Rao-Blackwellization can be used to improve the expected utility of an e-variable. 

We also prove several results which may be of independent interest, such as Lemma~\ref{lem:log-utility-defined} which gives sufficient conditions under which $\E[\log(E)]$ is well-defined, and Lemma~\ref{lem:general-jensen-ineq} which gives a general version of Jensen's inequality for nonnegative, potentially nonintegrable, and infinite-valued random variables.

Conditioning on a sufficient statistic has occurred in various places in the e-variable literature. \citet{lee2024boosting} condition on sufficient statistics in order to ``boost'' the e-BH procedure~\citep{wang2022false}. In the context of group invariance testing, \citet{perez2024statistics} show that one can first reduce the data to a sufficient statistic and then look for optimal e-variables in this smaller space.  
Recently, for point nulls and alternatives, \citet{chugg2025admissibility} showed that post-hoc hypothesis tests can be likewise improved. 

Our goal here is to demonstrate that these Rao-Blackwell type results are not specific to particular applications but hold very generally. 
Some of the results we are about to present 
appeared in one of the author's MSc thesis~\citep{deRoos2025}. 

\section{Rao-Blackwell theorems for e-statistics}

We consider a set of probability measures $\{P_\theta\}_{\theta\in\Theta}$ on a measurable space $(\Omega, \calF)$.
We assume $\Theta$ is partitioned into the null $\Theta_0$ and alternative $\Theta_1$. Recall that a sufficient statistic for $\Theta$ is often (heuristically) defined as a measurable function $S:\Omega \to \mathcal{S}$ such that the conditional distribution of the data $X\sim P_\theta$ given $S$ is independent of $\theta$, for all $\theta\in \Theta$. Formally, this means that for every measurable function $h$, there exists some measurable $g_h$ such that $\E_{\theta} [h(X) \mid  S] = g_h(S)$ $P_\theta$-a.s for all $\theta\in\Theta$, whenever these conditional expectations exist. 

When conditioning on a sufficient statistic, we will thus write $\E_\Theta[\cdot\mid S]$ without referring to any particular parameter $\theta\in\Theta$. Conditioning on $S$ should be understood as conditioning on $\sigma(S)$, the $\sigma$-field generated by $S$. More generally, we can also define the notion of a sufficient $\sigma$-field $\mathcal{G} \subset \mathcal{F}$ as a $\sigma$-field such that every conditional expectation $\E_\theta[h(X) \mid \mathcal{G}]$, $\theta \in \Theta$, has a universal version, which we denote by $\E_\Theta[h(X) \mid \mathcal{G}]$.  Although sufficient $\sigma$-fields have been the primary objects of study within the mathematical theory of sufficiency as developed by \citet{halmos1949application, bahadur1954sufficiency}, we will state and prove most of our results in terms of sufficient statistics to make them appear more gentle for a first reading. It is important to note, however, that our results are also true for sufficient $\sigma$-fields, and we will even briefly need to use this terminology in Section~\ref{sec:e-proc} when extending our theory to e-processes. 

In what follows, the term \textit{random variable} refers to a measurable function defined on $\Omega$ taking its values in the extended real line, denoted by $[-\infty, \infty]$. Furthermore, we say that the expectation $\E[X]$ of a random variable $X$ is \textit{(well-) defined (as an element of $[-\infty, \infty]$)} if $\E[X^+]$ or $\E[X^-]$ is finite. Here, $x^+ := \max(0, x)$ and $x^- = \max(0, -x)$ respectively denote the positive and negative part of $x$. This condition ensures that $\E[X]$ can be unambiguously assigned a value in $[-\infty, \infty]$, and is even sufficient to conclude that conditional expectations $\E[X \mid \mathcal{G}]$ exists. 
Therefore, when $S$ is sufficient, $\E_\Theta[X \mid S]$ is unambiguously defined (and exists) if $\E_\theta[X]$ is defined for all $\theta \in \Theta$. The latter is true in particular when $X$ is nonnegative, which is the case when $X$ is an e-variable. For details on such generalized (conditional) expectations, we refer to \citet{shiryaev2016probability}. 

Furthermore, whenever we refer to a \textit{concave utility function}, we mean a concave function $f:(0, \infty) \to \mathbb{R}$, which we extend to $f:[0, \infty] \to [-\infty, \infty]$ by putting $f(0) := \lim_{x \downarrow 0}f(x)$, and $f(\infty) := \lim_{x \to \infty}f(x)$. This construction ensures that the random variable $f(X)$ is well-defined for every nonnegative random variable $X$. Lastly, by an e-variable for $\Theta_0$ we mean an e-variable for $\calP = \{P_\theta\}_{\theta\in\Theta_0}$. 

\begin{theorem}
\label{thm:rb-evals}
Let $S$ be a sufficient statistic for $\Theta$. Let $E$ be an e-variable for $\Theta_0$ and set $G = \E_{\Theta}[E\mid S]$. 
Then $G$ is an e-variable 
and $\E_{\theta}[f(G)] \geq \E_{\theta}[f(E)]$ for any concave utility function $f$ and any $\theta\in\Theta$, assuming both sides of this inequality are well-defined. 
\end{theorem}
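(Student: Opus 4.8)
The plan is to split the statement into its two assertions — that $G$ is itself an e-variable, and that Rao-Blackwellization cannot decrease expected utility — and to dispatch the second by a single application of a conditional Jensen inequality, with essentially all of the difficulty pushed into the measure-theoretic generalities.

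For the first assertion I would argue directly from the definition of sufficiency. Since $E \geq 0$, the expectation $\E_\theta[E]$ is defined for every $\theta$, so $\E_\theta[E \mid S]$ exists and, by sufficiency, admits the universal version $G$; in particular $G = \E_\theta[E \mid S]$ holds $P_\theta$-a.s. for every $\theta \in \Theta$, and $G \geq 0$. Taking any $\theta \in \Theta_0$ and applying the tower property for the generalized expectation gives $\E_\theta[G] = \E_\theta\big[\E_\theta[E \mid S]\big] = \E_\theta[E] \leq 1$, so $G$ is an e-variable for $\Theta_0$.

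For the second assertion, fix $\theta \in \Theta$. The heart of the argument is the conditional form of Jensen's inequality applied to the concave utility $f$ and the nonnegative variable $E$, with the conditioning $\sigma$-field $\sigma(S)$: I want to establish
\[ f(G) = f\big(\E_\theta[E \mid S]\big) \geq \E_\theta[f(E) \mid S] \quad P_\theta\text{-a.s.} \]
Granting this pointwise inequality, integrating under $P_\theta$ and invoking the tower property once more yields
\[ \E_\theta[f(G)] \geq \E_\theta\big[\E_\theta[f(E) \mid S]\big] = \E_\theta[f(E)], \]
which is the claim; the hypothesis that both sides are well-defined is precisely what validates this final tower step (so that $\E_\theta[f(E) \mid S]$ exists and its expectation reconstructs $\E_\theta[f(E)]$).

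I expect the main obstacle to be justifying the conditional Jensen inequality at the required level of generality rather than anything in the bookkeeping above. Because $E$ may equal $\infty$, the extended utility $f(E)$ may take the values $\pm\infty$, and neither $E$ nor $f(E)$ need be integrable, so the textbook conditional Jensen inequality does not apply verbatim. I would therefore route this step through Lemma~\ref{lem:general-jensen-ineq}, applied conditionally: represent $f$ as the infimum of a countable family of affine majorants $\ell_n(x) = a_n x + b_n$ with $\ell_n \geq f$ and $f = \inf_n \ell_n$, observe that $\E_\theta[f(E) \mid S] \leq \E_\theta[\ell_n(E) \mid S] = a_n G + b_n = \ell_n(G)$ for each $n$, and then take the infimum over $n$ to recover $f(G)$. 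The delicate points are that the conditional expectations remain well-defined under the nonintegrability and infinities, that linearity of conditional expectation survives the possibly negative slopes $a_n$ together with $G \in [0,\infty]$, and that the infimum may be interchanged with the conditional expectation; these are exactly the issues that the generalized (conditional) Jensen lemma is designed to resolve.
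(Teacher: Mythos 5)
Your proposal is correct, and at the level of the theorem itself it coincides with the paper's argument: sufficiency plus the tower property show $\E_\theta[G]=\E_\theta[E]\le 1$ for $\theta\in\Theta_0$, and the utility comparison is one application of a conditional Jensen inequality for nonnegative, possibly nonintegrable and infinite-valued variables, followed by the tower property, with the well-definedness hypothesis doing exactly the work you assign it. Where you genuinely diverge is in how that generalized Jensen step would be established. The paper's Lemma~\ref{lem:general-jensen-ineq} is proved by truncation: it applies classical Jensen to $X_\delta^\varepsilon=(X+\varepsilon)\wedge\delta$ and passes to the limit via monotone/dominated convergence, with a three-case analysis on the behaviour of $f$ at $0$ and $\infty$. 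You instead sketch the supporting-hyperplane route: write $f=\inf_n \ell_n$ over a countable family of affine majorants, get $\E_\theta[f(E)\mid S]\le \E_\theta[\ell_n(E)\mid S]=\ell_n(G)$ for each $n$ by monotonicity and linearity of generalized conditional expectations, and take the infimum. This is a clean and arguably more standard alternative; note that the last step needs no interchange of infimum and conditional expectation, only countably many almost-sure inequalities. Two details deserve care if you flesh it out. First, the identity $f=\inf_n\ell_n$ must hold on all of $[0,\infty]$, including at $\infty$, since $G$ can be infinite with positive probability; tangent lines alone can fail there (for $f(x)=1-e^{-x}$ every tangent line tends to $+\infty$ at $\infty$ while $f(\infty)=1$), so the family must also include the constant majorant $\sup f$ whenever $f$ is increasing and bounded. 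Second, linearity with negative slopes, $\E_\theta[a_nE+b_n\mid S]=a_nG+b_n$ (possibly $-\infty$ on $\{G=\infty\}$), does hold for generalized conditional expectations since $(a_nE+b_n)^+$ is bounded, but it should be verified rather than asserted. With these points filled in, your route yields the paper's lemma, and hence the theorem, in the same generality.
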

\begin{proof}
By definition of sufficiency, $\E_\theta[G] = \E_\theta[\E_\theta[E\mid S]] = \E_\theta[E]$ for any $\theta$, implying that $G$ is a bonafide e-variable: 
\begin{equation}
\label{eq:G-is-eval}
\sup_{\theta\in\Theta_0} \E_\theta[G] = \sup_{\theta\in\Theta_0}\E_\theta[E]\leq 1.    
\end{equation} The rest of the proof becomes embarrassingly simple after realizing that
\[
\E_\theta[f(E)] = \E_\theta[\E_\theta[f(E) \mid S]] \leq \E_\theta[f(\E_\theta[E \mid S])] = \E_\theta[f(G)],
\]
follows by the tower property and Jensen's inequality under suitable integrability assumptions. Such integrability assumptions are completely redundant, however, because we show in Lemma~\ref{lem:general-jensen-ineq} in Appendix~\ref{sec:jensen} that the above reasoning also applies under the minimal assumption that $\E_\theta[f(E)]$ and $\E_\theta[f(G)]$ are well-defined elements of $[-\infty, \infty]$.
\end{proof}

\begin{remark}
\label{rem:any-variable}
    The random variable $E$ need not be an e-variable in Theorem~\ref{thm:rb-evals} for the inequality to hold, provided that both expectations are defined. It need only be nonnegative, as is clear from Lemma~\ref{lem:general-jensen-ineq} in Appendix~\ref{sec:jensen}.
\end{remark}
\begin{remark}
    Even though we have formulated Theorem~\ref{thm:rb-evals} in such a manner that it is true for any concave utility function, its applicability (and meaning) relies on the existence of $\E_\theta[f(E)]$ and $\E_\theta[f(G)]$, which may depend on $f$, $\Theta$, and $E$. If, however, $f$ is bounded from above or below, $\E_\theta[f(E)]$ and $\E_\theta[f(G)]$ always exist, so that Theorem~\ref{thm:rb-evals} is applicable for such utility functions. This class of utility functions, for instance, includes the power utilities $f(x) = x^{1-\gamma}/(1-\gamma)$, $\gamma > 1$, which have been studied by \citet{larsson2025numeraire}, and a bit more generally by \citet{koning2024continuous}. On the other hand, the log-utility is not bounded from above or below, and we will illustrate in Example~\ref{ex:cauchy} that Theorem~\ref{thm:rb-evals} is not always applicable to this utility.
\end{remark}

We make several comments. First, like in the original Rao-Blackwell theorem, it is important that $S$ be sufficient so that $G$ is computable. 
Second, note that the utility improvement is for all $\theta\in\Theta$, not just for $\theta\in\Theta_1$. This might make us uncomfortable, but there is no paradox here. The improvement might be different for different parameters, hence there could be no change under the null but a large one under the alternative.  Third, we have not assumed the existence of a common dominating measure for the family $\{P_\theta\}$. This is unlike the recent statement of \citet{chugg2025admissibility}, who prove their result using the likelihood ratio between the null and alternative, thus assuming such a measure. Finally, a simple consequence of Theorem~\ref{thm:rb-evals} is that when looking for expected utility optimal, in particular log-optimal (GRO), e-variables, we can restrict our search to those random variables that are functions of a sufficient statistic (if a reasonable one exists). We will come back to this in Section~\ref{sec:permutation}. 

To make things concrete, let us give a brief example of using Rao-Blackwellization to strictly improve the expected utility of a naive e-variable. A common example in the original Rao-Blackwell setting is to consider an estimator which only uses the first observation. We follow in this tradition.

\begin{example}
\label{ex:bern-naive-eval}
Let $X_1,\dots,X_n$ be i.i.d. $\mathrm{Ber}(p)$ with parameter $\theta=p\in(0,1)$.
We test the simple null $\Theta_0=\{p_0\}$ for some fixed $p_0\in(0,1)$
against some alternative $\Theta_1$. 
A sufficient statistic for $p$ is the total number of successes $S = \sum_{i=1}^n X_i$. 
Consider the following naive e-variable which only looks at the first observation:
\[
E:= \exp\bigl\{\lambda X_1 - \psi(\lambda)\bigr\},
\]
where $\lambda>0$ is fixed and $\psi(\lambda)
:= \log\bigl(p_0 e^\lambda + (1-p_0)\bigr).$
By construction, $\E_{p_0} [E]=1$. Define $G(S) = \E_{\Theta}[E\mid S]$, the Rao-Blackwellization of $E$. 

Given $S=k$, exchangeability under $p \in (0,1)$
implies that the conditional distribution of $(X_1,\dots,X_n)$ is uniform over all
$\binom{n}{k}$ permutations, and therefore $
P_{p}(X_1=1\mid  S=k) = \frac{k}{n}$, and 
$P_{p}(X_1=0\mid  S=k) = 1 - \frac{k}{n}.$
Hence
\begin{align*}
G(S)
&= e^{-\psi(\lambda)}
   \left(\frac{S}{n} e^\lambda + \bigg(1-\frac{S}{n}\bigg)\right)
 = e^{-\psi(\lambda)}\left[1 + (e^\lambda-1)\frac{S}{n}\right].
\end{align*}
Clearly $\E_{p_0}[G(S)] = 1$. 
Consider any strictly concave function $f$ on $(0,\infty)$. 
We claim that $\E_p[f(G)] > \E_p[f(E)]$ for all $p\in(0,1)$. Note that for any $k\in\{1,\dots,n-1\}$, $\E_p[f(E) \mid  S=k]<f(\E_p[E \mid  S=k]) = f(G(k))$, using the strict concavity of $f$. Meanwhile, for $k\in\{0,n\}$, $\E_p[f(E) \mid  S=k] = f(G(k))$. Since $P_p(S\neq 0,n) = 1 - p^n - (1-p)^n>0$, it follows that 
\[\E_p[f(G)] - \E_p[f(E)] > 0,\]
demonstrating strict utility improvement under the alternative. 
\end{example}

We emphasize that Rao-Blackwellization is not guaranteed to find the optimal e-variable. For example, for a point null and alternative $\Theta_0=\{p_0\}$, $\Theta=\{p_1\}$, the GRO e-variable for $X_1,\dots,X_n\sim \text{Ber}(p)$ is the likelihood ratio $L_n = \prod_{i=1}^n p_1(X_i)/p_0(X_i)$. But the Rao-Blackwellized e-variable $G(S)$ in Example~\ref{ex:bern-naive-eval} is not equal to $L_n$ for any choice of $\lambda$. 

\subsection{The log-utility}
The requirement that $\E_\theta[f(E)]$ and $\E_\theta[f(G)]$ are well-defined in Theorem~\ref{thm:rb-evals} is, of course, not always met. (See example~\ref{ex:cauchy} below.) But Rao-Blackwellization in such cases may still lead to sensible results. This section 
focuses on the log-utility in particular, demonstrating how in this case we can 
weaken the assumptions on the existence of $\E_\theta[f(E)]$ and $\E_\theta[f(G)]$ in Theorem~\ref{thm:rb-evals} and still conclude that $G$ is an improvement over $E$.  

We begin by giving sufficient conditions under which $\E_\theta[\log(E)]$ exists for every e-variable $E$, thereby ensuring that Theorem~\ref{thm:rb-evals} is meaningful. We then present an example where Theorem~\ref{thm:rb-evals} cannot be applied because $\E_\theta[\log(E)]$ does not exist. This motivates Theorem~\ref{thm:rb-log-util-general}, a generalization of Theorem~\ref{thm:rb-evals} (in the case of $f=\log$) that allows us to compare the expected log-utility of $E$ with that of its Rao–Blackwellization $G$ even when $\E_\theta[\log(E)]$ or $\E_\theta[\log(G)]$ might not be defined.

The following lemma roughly shows that under a weak condition on the KL-divergence \citep{kullback1951information} between the alternative distribution and the null all e-variables are comparable in the sense of Theorem~\ref{thm:rb-evals}. Recall that the KL-divergence between two (finite) measures $\nu$ and $\mu$ is defined as
\begin{align*}
    \KL(\mu \| \nu) := \begin{cases}
        \int \log\left(\frac{d\nu}{d\mu}\right)d\nu, &\text{if} \quad  \nu\ll\mu, \\
        +\infty, &\text{otherwise},
    \end{cases}
\end{align*}
where $\nu\ll \mu$ means that $\nu$ is absolutely continuous with respect to $\mu$. 
$\KL(\cdot\|\cdot)$ is always well-defined as an element of $[0, \infty]$. The following lemma also relies on the notion of the \textit{effective null hypothesis}, denoted by $\mathcal{P}_\text{eff}$ \citep{larsson2025numeraire}, which is defined as
\begin{equation}
    \mathcal{P}_\text{eff} := \{P \in M_+ \mid  \E_P[E] \leq 1 \text{ for all } E \in \mathcal{E}\},
\end{equation}
where $M_+$ denotes the set of (nonnegative) measures on $(\Omega, \mathcal{F})$, and $\mathcal{E}$ the set of e-variables for $\mathcal{P}$. This effective null hypothesis, consisting only of (sub-) probability measures, is the largest extension of the null hypothesis $\mathcal{P}$ for which $\mathcal{E}$ is still the class of all e-variables. It includes any ($\sigma$-) convex combination of elements of the null, and also all Bayes marginals $P_W$ with $W$ a prior on $\Theta_0$, when this notion is defined.
\begin{lemma}
\label{lem:log-utility-defined}
Let $\theta \in \Theta$. If $\inf_{P \in \mathcal{P}_\text{eff}}\KL(P_\theta \|  P) < \infty$, then $\E_{\theta}[\log(E)]$ is defined for every e-variable $E$. 
\end{lemma}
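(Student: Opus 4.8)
The plan is to show that the positive part $(\log E)^+$ is $P_\theta$-integrable; by the paper's own convention this already makes $\E_\theta[\log E]$ well-defined (with value in $[-\infty, \infty)$), so it suffices to bound $\E_\theta[(\log E)^+]$ and I need not touch the negative part at all. Since the quantity $\inf_{P \in \mathcal{P}_{\text{eff}}}\KL(P_\theta \| P)$ is finite, I can fix a single $P \in \mathcal{P}_{\text{eff}}$ with $\KL(P_\theta \| P) < \infty$ (no minimizer is needed, only one measure with finite divergence). Finiteness of this divergence supplies the absolute continuity $P_\theta \ll P$ and the integrability of the log-density $\log(dP_\theta/dP)$ under $P_\theta$. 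Crucially, because $E$ is an e-variable and $P \in \mathcal{P}_{\text{eff}}$, I also have the budget constraint $\E_P[E] \leq 1$, which is the only property of $\mathcal{P}_{\text{eff}}$ I will use.

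The heart of the argument is to trade an exponential-moment bound under $P$ for a logarithmic-moment bound under $P_\theta$, paying the KL divergence as the cost of the change of measure. First I note the pointwise identity $e^{(\log E)^+} = \max(E,1) \leq 1 + E$, so that $\E_P[e^{(\log E)^+}] \leq P(\Omega) + \E_P[E] \leq 2$, using that elements of $\mathcal{P}_{\text{eff}}$ are sub-probability measures. I then invoke the Gibbs (Donsker--Varadhan) variational inequality: for $P_\theta \ll P$ and any measurable $h$, one has $\E_{P_\theta}[h] \leq \KL(P_\theta \| P) + \log \E_P[e^{h}]$. Taking $h = (\log E)^+$ gives $\E_\theta[(\log E)^+] \leq \KL(P_\theta \| P) + \log 2 < \infty$, which is exactly the conclusion. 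Equivalently, one can sidestep the variational principle and argue pointwise through Young's inequality for the convex conjugate pair $(x\log x,\, e^{y-1})$, namely $(dP_\theta/dP)\,(\log E)^+ \leq (dP_\theta/dP)\log(dP_\theta/dP) + e^{-1}\max(E,1)$, and then integrate against $P$ to reach the same bound with constant $2/e$ in place of $\log 2$.

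The step demanding the most care is making the variational inequality rigorous, since $h = (\log E)^+$ is unbounded and may a priori fail to be $P_\theta$-integrable, so I cannot simply quote a statement for bounded $h$. The clean route is to introduce the tilted probability measure $dP_h \propto e^{h}\,dP$, which is legitimate precisely because $\E_P[e^{h}] \leq 2 < \infty$, and to expand the (nonnegative) divergence as $\KL(P_\theta \| P_h) = \KL(P_\theta \| P) + \log \E_P[e^{h}] - \E_\theta[h]$; here the rearrangement is valid because $\log(dP_\theta/dP)$ is $P_\theta$-integrable and the subtracted term $h$ is nonnegative, so the right-hand side is unambiguously defined in $[-\infty, \infty)$. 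Nonnegativity of $\KL(P_\theta \| P_h)$ then forces $\E_\theta[h] \leq \KL(P_\theta \| P) + \log 2$ and in particular rules out $\E_\theta[h] = +\infty$. Along the way I must confirm that finite KL yields the correct direction of absolute continuity so that $dP_\theta/dP$ exists, and that every computation survives $P$ being a strict sub-probability measure; neither point is deep, but both deserve an explicit sentence.
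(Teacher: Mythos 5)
Your proof is correct, but it takes a genuinely different route from the paper's, even though both share the same information-theoretic core: a change of measure from $P_\theta$ to an element of $\mathcal{P}_\text{eff}$ with finite KL, combined with the budget $\E_P[E]\leq 1$. The paper does not work with an arbitrary finite-KL element: it takes $P'$ to be the reverse information projection of $P_\theta$ onto $\mathcal{P}_\text{eff}$, invoking \citet{larsson2025numeraire} precisely because the RIPr satisfies $P'\ll P_\theta$ in addition to $P_\theta\ll P'$; this mutual absolute continuity is what legitimizes the pointwise decomposition $\log E=\log(dP_\theta/dP')+\log(dR/dP_\theta)$ with $dR=E\,dP'$, after which Jensen's inequality (using $\E_\theta[dR/dP_\theta]=\E_{P'}[E]\leq 1$) places the second term's expectation in $[-\infty,0]$ and yields $\E_\theta[\log E]\leq\KL(P_\theta\|P')$. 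You avoid the RIPr machinery entirely---any $P$ with $\KL(P_\theta\|P)<\infty$ suffices, and you only need the one-sided domination $P_\theta\ll P$---and you instead isolate the positive part $(\log E)^+$, controlling it through tilting (Donsker--Varadhan) or Young's inequality via $\E_P[e^{(\log E)^+}]=\E_P[\max(E,1)]\leq 2$. Your justification of the tilting identity is sound: since $\log(dP_\theta/dP)$ is $P_\theta$-integrable and $h=(\log E)^+\geq 0$, the rearranged expectation identity is unambiguous, and nonnegativity of $\KL(P_\theta\|P_h)$ rules out $\E_\theta[h]=+\infty$. What your route buys is self-containedness and arguably the cleanest certification of well-definedness. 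What the paper's heavier route buys is the slack-free quantitative bound $\E_\theta[\log E]\leq\inf_{P\in\mathcal{P}_\text{eff}}\KL(P_\theta\|P)$, which is not merely a proof artifact: that sharper inequality (not just well-definedness) is reused in the Pareto GROW argument of Appendix~\ref{proof:pareto-grow}. Your version carries an additive $\log 2$ (or $2/e$) slack, though once well-definedness is in hand, rerunning your tilting argument with $h=\log E$ itself and $\E_P[e^{h}]=\E_P[E]\leq 1$ (the case $P_\theta(E=0)>0$ being trivial, since then $\E_\theta[\log E]=-\infty$) recovers the slack-free bound as well.
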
 
\begin{proof}
By definition of KL-divergences, because $\inf_{P \in \mathcal{P}_\text{eff}}\KL(P_\theta \| P)< \infty$, there is at least one element $P' \in \mathcal{P}_\text{eff}$ that dominates $P_\theta$. In fact, one can, then, take $P'$ to be the reverse information projection (RIPr) (as defined in \citealt{larsson2025numeraire}) of $P_\theta$ onto $\mathcal{P}_\text{eff}$, a sub-probability measure which additionally satisfies $P' \ll P_\theta$ when $\inf_{P \in \mathcal{P}_\text{eff}}\KL(P_\theta \| P)< \infty$, and achieves the infimum $\KL(P_\theta \|  P') = \inf_{P \in \mathcal{P}_\text{eff}}\KL(P_\theta \| P)$. (For details, we refer to \citet{larsson2025numeraire}.)

Now, let $E$ be an $e$-variable, and define the (sub-) probability measure $R$ via the relation $dR = EdP'$. Then,
    \begin{align*}
    \E_{\theta}[\log(E)] = \E_{\theta}\left[ \log \frac{dR}{dP'}\right] &= \E_\theta\left[\log \frac{dP_\theta}{dP'} + \log\frac{dR}{dP_\theta}\right] 
    = \KL(P_\theta \|  P') + \E_\theta\left[\log\frac{dR}{dP_\theta}\right].
    \end{align*}
    Since $\E_\theta[dR/dP_\theta] \leq 1$, Jensen's inequality implies that $\E_\theta[\log (dR/dP_\theta)] \in [-\infty, 0]$. This, combined with the fact that $\KL(P_\theta \|  P') < \infty$, implies that 
    \[
    -\infty \leq \E_\theta[\log(E)] \leq \KL(P_\theta \| P') < \infty,
    \] and $\E_\theta[\log(E)]$ is therefore defined.
\end{proof}
\begin{remark}
    The condition $\KL(P_\theta \|  \mathcal{P}_\text{eff}) < \infty$ can easily be verified by constructing a mixture of elements of the null $P'$, and checking whether $\KL(P_\theta \|  P') < \infty$. In most practical situations, this will be the viable strategy, as precisely identifying the effective null is often too difficult. The reason we formulated the above lemma using $\mathcal{P}_\text{eff}$ is that it is the most general formulation we consider possible, and it may potentially cover some corner cases.
\end{remark}

When the conditions of Lemma~\ref{lem:log-utility-defined} are not met, $\E_\theta[\log(E)]$ cannot be guaranteed to exist. Consider the following example.

\begin{example}
\label{ex:cauchy}
    Consider the simple versus simple testing problem $X \sim P_{\theta_0}=N(0,1)$ versus $X \sim P_{\theta_1} = \Cauchy(0,1)$. As both $P_{\theta_0}$ and $P_{\theta_1}$ are symmetric about zero, the statistic $S = |X|$ is sufficient for $\Theta = \{\theta_0, \theta_1\}$, and for any $h$ we have that $\E_\Theta[h(X) \mid  S] = (h(X) + h(-X))/2$. The statistic $E = \exp(X-1/2)$ is an e-variable, and so is $G = \E_\Theta[\exp(X-1/2) \mid  S] = (\exp(X -1/2) + \exp(-X -1/2))/2$ by Theorem~\ref{thm:rb-evals}. Here, $\E_{\theta_1}[\log(E)] = \E_{\theta_1}[X-1/2]$, which is not defined, as $\E[X^+] = \E[X^-] = \infty$. On the other hand, $\log(G)$ is bounded from below, so $\E_{\theta_1}[\log(G)]$ is defined as an element of $(-\infty, \infty]$; in fact, $\E_{\theta_1}[\log(G)] = \infty$. Therefore, $E$ and $G$ are not comparable in the sense of Theorem~\ref{thm:rb-evals}. 
\end{example}

Faced with Example~\ref{ex:cauchy}, we next give a stronger result than Theorem~\ref{thm:rb-evals} which enables us to compare $G$ and $E$ even when $\E_\theta[\log(E)]$ does not exist. 
Following \citet{lardy2024reverse} and \citet{larsson2025numeraire}, we formulate the improvement of $G$ over $E$ in terms of their ratio. We use the conventions $0/0=0$ and $\infty/\infty=1$.  

\begin{theorem} 
\label{thm:rb-log-util-general}
    Let $E$ be an e-variable for $\Theta_0$, and $S$ a sufficient statistic for $\Theta$. Set $G = \E_\Theta[E\mid S]$. Then for all $\theta\in\Theta$,  $\E_\theta[E/G]\leq 1$, and therefore $\E_\theta[\log(E/G)] \leq 0$.
\end{theorem}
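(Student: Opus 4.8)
The plan is to exploit the fact that $G$, being a function of $S$, is $\sigma(S)$-measurable, and to compute $\E_\theta[E/G]$ by conditioning on $S$ and pulling $G$ through the conditional expectation. Concretely, I would first write, by the tower property,
\[
\E_\theta[E/G] = \E_\theta\bigl[\E_\theta[E/G \mid S]\bigr],
\]
where the outer expectation is well-defined because $E/G$ is nonnegative, so the inner conditional expectation is an element of $[0,\infty]$. The whole proof then reduces to showing that this inner conditional expectation is at most $1$ almost surely.

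The core step is therefore to establish $\E_\theta[E/G\mid S]\le 1$ $P_\theta$-a.s. Since $G$ is $\sigma(S)$-measurable, I would split $\Omega$ into the three $\sigma(S)$-measurable events $\{G=0\}$, $\{0<G<\infty\}$, and $\{G=\infty\}$ and treat each separately. On $\{0<G<\infty\}$ the factor $1/G$ is a finite, nonnegative, $\sigma(S)$-measurable random variable, so the ``taking out what is known'' rule for nonnegative variables gives $\E_\theta[E/G\mid S]=(1/G)\,\E_\theta[E\mid S]=G/G=1$. On $\{G=0\}$ I would note that $\E_\theta[E\,\ind_{\{G=0\}}]=\E_\theta[\ind_{\{G=0\}}\E_\theta[E\mid S]]=\E_\theta[G\,\ind_{\{G=0\}}]=0$, which together with $E\ge 0$ forces $E=0$ $P_\theta$-a.s.\ on that event, so that by the convention $0/0=0$ we get $E/G=0$ there. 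On $\{G=\infty\}$ the conventions $\infty/\infty=1$ and $(\text{finite})/\infty=0$ give $E/G\le 1$ pointwise. Combining the three pieces yields $\E_\theta[E/G\mid S]\le 1$ a.s., and hence $\E_\theta[E/G]\le 1$.

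For the second inequality I would deduce $\E_\theta[\log(E/G)]\le 0$ from $\E_\theta[E/G]\le 1$ via the elementary bound $\log x \le x-1$, valid on $[0,\infty]$ under the stated conventions (with $\log 0=-\infty\le -1$, and equality $\log 1 = 0 = 1-1$ at the value $\infty/\infty$). This gives $\log(E/G)\le E/G-1$ pointwise; since $\log^+(E/G)\le E/G$ has finite expectation, $\E_\theta[\log(E/G)]$ is a well-defined element of $[-\infty,\infty)$, and monotonicity of expectation yields $\E_\theta[\log(E/G)]\le \E_\theta[E/G]-1\le 0$. Alternatively, one could invoke the general Jensen inequality of Lemma~\ref{lem:general-jensen-ineq} applied to the concave function $\log$.

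The main obstacle is bookkeeping at the boundary values rather than any deep idea: I must make sure that the ``taking out what is known'' step is justified with no integrability hypothesis—which is why I restrict it to $\{0<G<\infty\}$ and rely on the nonnegative version of the rule—and that the conventions $0/0=0$ and $\infty/\infty=1$ are applied consistently on $\{G=0\}$ and $\{G=\infty\}$. The sufficiency of $S$ enters precisely in guaranteeing that $G=\E_\Theta[E\mid S]$ is a single $\sigma(S)$-measurable random variable not depending on $\theta$, so that the same $G$ may be used under every $P_\theta$ throughout the argument above.
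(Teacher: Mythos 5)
Your proposal is correct and follows essentially the same route as the paper's own proof: the same decomposition of $\Omega$ into the $\sigma(S)$-measurable events $\{G=0\}$, $\{0<G<\infty\}$, $\{G=\infty\}$, the same use of the nonnegative ``taking out what is known'' rule on the middle event, the same observation that $E=0$ $P_\theta$-a.s.\ on $\{G=0\}$, and the conventions $0/0=0$, $\infty/\infty=1$ on the boundary events, yielding $\E_\theta[E/G\mid S]\leq 1$ a.s. The only (cosmetic) difference is in the last step, where you deduce $\E_\theta[\log(E/G)]\leq 0$ from the elementary bound $\log x\leq x-1$ rather than from Jensen's inequality as the paper does; both are valid, and your explicit check that $\E_\theta[\log(E/G)]$ is well-defined is a nice touch.
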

\begin{proof}
Fix $\theta\in\Theta$. Because $E/G$ is nonnegative, the conditional expectation $\E_\theta[E/G \mid S]$ is defined. Then, $P_\theta$-almost surely,
\begin{align*}
    \E_\theta\left[\frac{E}{G} \given S\right] &=  \E_\theta\left[\frac{E}{G} \ind{\{G < \infty\}} \given S\right] +  \E_\theta\left[\frac{E}{G} \ind{\{G = \infty\}} \given S\right] \\
    &\leq \E_\theta\left[\frac{E}{G} \ind{\{G < \infty\}} \given S\right] + \ind{\{G = \infty\}},
\end{align*}
because the conventions $0/0 = 0$ and $\infty/\infty = 1$ ensure that $E/G \leq 1$ on the $\sigma(S)$-measurable set $\{G = \infty\}$. Now, as $G = \E_\theta[E \mid S]$ $P_\theta$-a.s, it follows that $E = 0$ $P_\theta$-a.s on the set $\{G = 0\}$. Thus, we may write
\[
    \frac{E}{G} \ind{\{G < \infty\}} = \left(\frac{\ind{\{0<G < \infty\}}}{G}\right) E \quad P_\theta\text{-a.s,}
\]
so that the product rule for (general) conditional expectations implies that $P_\theta$-a.s,
\begin{align*}
\E_\theta\left[ \frac{E}{G} \ind{\{G < \infty\}}\given S\right] &= \left(\frac{\ind{\{0<G < \infty\}}}{G}\right) \E_\theta [E \mid S] \\
&= \left(\frac{\ind{\{0<G < \infty\}}}{G}\right) G = \ind{\{0<G < \infty\}}.
\end{align*}
Whence, 
\[
\E_\theta\left[\frac{E}{G} \given S\right] \leq \ind{\{0<G < \infty\}} + \ind{\{G = \infty\}} \leq 1 \quad P_\theta\text{-a.s}.
\]
The inequalities $\E_\theta\left[E/G \right] \leq 1$ and  $\E_\theta\left[\log(E/G)\right] \leq 0$
then follow by the tower property and Jensen's inequality, completing the proof.
\end{proof}

We note that Remark~\ref{rem:any-variable} also applies to Theorem~\ref{thm:rb-log-util-general}. That is, $E$ need not be an e-variable; just a nonnegative random variable. Next let us see how the above result  overcomes Example~\ref{ex:cauchy}. 

\begin{example}
    Continuing with Example~\ref{ex:cauchy}, note that $E/G = 2(1 + \exp(-2X))^{-1}$. For all $X$, $1 + \exp(-2X) \geq 1$ so $\log(E/G)\leq \log(2)$. Meanwhile, we also have $1 + \exp(-2X) \geq \exp(-2X)$, so $\log(E/G) \leq \log(2) + 2X$. Therefore, 
    \begin{align*}
        \E_{\theta_1}[\log(E/G)] &= \E_{\theta_1}[\log(E/G)\ind\{X\leq 0\}] + \E_{\theta_1}[\log(E/G)\ind\{X\geq 0\}] \\
        &\leq \int_{-\infty}^0 \frac{\log(2) + 2x}{\pi(1 + x^2)}\d x + \log(2) = -\infty. 
    \end{align*}
    That is, Theorem~\ref{thm:rb-log-util-general} still allows $E$ and $G$ to be compared, even though Theorem~\ref{thm:rb-evals} does not. 
\end{example}

\subsection{Extension to e-processes}
\label{sec:e-proc}

To move to the sequential setting and extend the result to e-processes, we assume that the measurable space $(\Omega,\calF)$ admits a filtration $(\calF_t)_{t\geq 0}$. An e-process for $\Theta_0$ with respect to $(\calF_t)$ is a sequence $(E_t)_{t\geq 0}$ of nonnegative random variables such that $E_t$ is $\calF_t$-measurable for all $t\geq 0$ and $E_\tau$ is an e-variable for $\Theta_0$ at every $(\calF_t)$-stopping time $\tau$. Recall that $\tau$ is a $(\calF_t)$-stopping time if the event $\{\tau = t\}$ is $\calF_t$-measurable for all $t\geq 0$. 

Instead of a single sufficient statistic $S$, we consider here \textit{sufficient filtrations $(\mathcal{S}_t)_{t\geq 0} \subset (\mathcal{F}_t)_{t\geq 1}$}. That is, a (sub-) filtration $(\mathcal{S}_t)_{t\geq 0} \subset (\mathcal{F}_t)_{t\geq 0}$ is sufficient for $\Theta$ if the sub-$\sigma$-fields $S_t \subset \mathcal{F}_t$ are each \textit{sufficient for $\Theta$ in the the $\mathcal{F}_t$-experiment}: for every $\mathcal{F}_t$-measurable $h$, there exists a universal version, denoted by $\E_\Theta[h(X) \mid \mathcal{S}_t]$, of every $\E_\theta[h(X) \mid \mathcal{S}_t]$, $\theta \in \Theta$, granted that these conditional expectations exist.

\begin{theorem}
\label{thm:rb-eproc}
Let $(\calS_t)$ be a sufficient filtration for $\Theta$ satisfying $\calS_t \subseteq \calF_t$ for all $t\geq 0$. 
    Let $(E_t)$ be an e-process for $\Theta_0$ with respect to $(\calF_t)$ and define $G_t = \E_{\Theta}[E_t \mid  \calS_t]$. Then (i) $(G_t)$ is an e-process for $\Theta_0$ with respect to $(\calS_t)$ and (ii) for all $\theta\in\Theta$,  $\E_\theta[f(G_\tau)] \geq \E_\theta[f(E_\tau)]$ for all $(\calS_t)$-stopping times $\tau$ and all concave utility functions $f$, provided both sides of the inequality are defined. Moreover, $\E_\theta[\log(E_\tau/G_\tau)]\leq 0$ for any $(\calS_t)$-stopping time $\tau$. 
\end{theorem}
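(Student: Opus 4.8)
The plan is to reduce both claims to the single-shot results already proved, by showing that Rao--Blackwellization commutes with optional stopping. Two elementary observations set this up. First, since $\calS_t\subseteq\calF_t$, every $(\calS_t)$-stopping time $\tau$ is also an $(\calF_t)$-stopping time, so the stopped variable $E_\tau$ is genuinely an e-variable for $\Theta_0$ by hypothesis. Second, because each $G_t$ is $\calS_t$-measurable, the stopped variable $G_\tau=\sum_t G_t\ind\{\tau=t\}$ is nonnegative and measurable with respect to the stopped $\sigma$-field $\calS_\tau=\{A\in\calF:A\cap\{\tau=t\}\in\calS_t \text{ for all } t\}$.

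The pivotal step is the identity $G_\tau=\E_\Theta[E_\tau\mid\calS_\tau]$. To prove it, I would fix $\theta$ and $A\in\calS_\tau$ and decompose over the (countably many) values of $\tau$, writing $\E_\theta[E_\tau\ind_A]=\sum_t\E_\theta[E_t\ind_{A\cap\{\tau=t\}}]$, where the interchange of sum and expectation is legitimate by Tonelli since every term is nonnegative. Each set $A\cap\{\tau=t\}$ lies in $\calS_t$, so the defining relation $G_t=\E_\theta[E_t\mid\calS_t]$ ($P_\theta$-a.s.) lets me replace $E_t$ by $G_t$ inside each summand, giving $\E_\theta[E_\tau\ind_A]=\E_\theta[G_\tau\ind_A]$. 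As $G_\tau$ is $\calS_\tau$-measurable, this shows $G_\tau$ is a version of $\E_\theta[E_\tau\mid\calS_\tau]$; and because the same random variable $G_\tau$ works for every $\theta$ while $\calS_\tau$ does not depend on $\theta$, it is a universal version, justifying the notation $\E_\Theta[E_\tau\mid\calS_\tau]$.

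Everything then follows from the identity. For (i), taking $A=\Omega$ gives $\E_\theta[G_\tau]=\E_\theta[E_\tau]\leq 1$ for every $\theta\in\Theta_0$, which together with the nonnegativity and $\calS_\tau$-measurability of $G_\tau$ shows that $(G_t)$ is an e-process for $\Theta_0$ with respect to $(\calS_t)$. For (ii), the identity places us in exactly the setting of Theorem~\ref{thm:rb-evals}, now with the $\sigma$-field $\calS_\tau$ in the role of $S$ and the nonnegative $E_\tau$ in the role of $E$ (cf.\ Remark~\ref{rem:any-variable}): the tower property together with the general Jensen inequality of Lemma~\ref{lem:general-jensen-ineq} yields $\E_\theta[f(G_\tau)]\geq\E_\theta[f(E_\tau)]$ whenever both sides are defined. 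The final inequality $\E_\theta[\log(E_\tau/G_\tau)]\leq 0$ comes the same way, by invoking Theorem~\ref{thm:rb-log-util-general} with $\calS_\tau$ in place of $S$---its proof uses only that $G_\tau$ is a $P_\theta$-version of $\E_\theta[E_\tau\mid\calS_\tau]$.

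I expect the main obstacle to be the optional-stopping identity, and in particular the measure-theoretic bookkeeping needed to confirm that $G_\tau$ is a genuine \emph{universal} version of $\E_\theta[E_\tau\mid\calS_\tau]$ rather than merely a $P_\theta$-version for each fixed $\theta$, and that the sum-over-$t$ decomposition is valid for the index set at hand. Once this identity is secured, parts (i) and (ii) are essentially immediate corollaries of the single-shot Theorems~\ref{thm:rb-evals} and~\ref{thm:rb-log-util-general}.
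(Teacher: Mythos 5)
Your proposal is correct and follows essentially the same route as the paper's proof: both hinge on the optional-stopping identity $G_\tau = \E_\theta[E_\tau\mid\calS_\tau]$, established by decomposing over $\{\tau=t\}$ and using that $H\cap\{\tau=t\}\in\calS_t$, after which (i) follows because $\tau$ is also an $(\calF_t)$-stopping time (so $\E_{\theta_0}[E_\tau]\leq 1$), and (ii) and the log statement follow by applying Lemma~\ref{lem:general-jensen-ineq} (via the argument of Theorem~\ref{thm:rb-evals}) and Theorem~\ref{thm:rb-log-util-general} with $\calS_\tau$ in place of $S$. Your additional care in checking that $G_\tau$ is a \emph{universal} version is a point the paper passes over more quickly, but it is the same argument.
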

\begin{proof}
First we observe that $G_\tau = \E_\theta[E_\tau \mid \calS_\tau]$ $P_\theta$-a.s for any $(\calS_t)$-stopping time $\tau$. Indeed, consider any $H\in\calS_\tau$. Put $H_t = H \cap\{\tau = t\}$, which is $\calS_t$-measurable. Then 
\begin{equation*}
    \int_H G_\tau \d P_\theta = \sum_{t\geq 0} \int_{H_t} G_t \d P_\theta = \sum_{t\geq 0} \int_{H_t} E_t \d P_\theta = \int_H E_\tau \d P_\theta.
\end{equation*}
This implies that $(G_t)$ is an e-process for $\Theta_0$: for $\theta_0 \in \Theta_0$, $\E_{\theta_0} [G_\tau ] = \E_{\theta_0}[\E_{\theta_0}[E_\tau\mid S_\tau]] = \E_{\theta_0}[E_\tau] \leq 1$, where the final inequality follows since the event $\{\tau=t\}$, by virtue of being $\calS_t$-measurable, is also $\calF_t$-measurable.  
Next, following the same reasoning as in Theorem~\ref{thm:rb-evals}, for any $\theta\in\Theta$ and any concave utility function $f$, we have by Lemma~\ref{lem:general-jensen-ineq} in Appendix~\ref{sec:jensen} that
\begin{align*}
    \E_\theta[f(E_\tau)] = \E_\theta[\E_\theta[f(E_\tau)\mid \mathcal{S}_\tau]] \leq \E_\theta[f(\E_\theta[E_\tau\mid \mathcal{S}_\tau])] = \E_\theta[f(G_\tau)],
\end{align*}
provided that both $\E_\theta[f(E_\tau)]$ and $\E_\theta[f(G_\tau)]$ exist. The final statement follows from applying Theorem~\ref{thm:rb-log-util-general} to $G_\tau$ and $E_\tau$. 
\end{proof}

To make Theorem~\ref{thm:rb-eproc} more concrete, consider an adaptive experiment where data $X_1,\dots,X_t$ are revealed one observation at a time, $\calF_t = \sigma(X_1,\dots,X_t)$, and $S_t$ is a sufficient statistic for $\Theta$ at time $t$. One naturally wishes to apply Theorem~\ref{thm:rb-eproc} with $\calS_t =\sigma(S_1,\dots,S_t)$. 

Counterintuitively, however, defining $\calS_t$ in this way may not result in a sufficient filtration. Indeed, 
\citet{burkholder1961sufficiency} gives an example of this phenomenon. But such examples are rather pathological, and we can make them vanish by assuming that our model is locally dominated; that is, the restrictions $\{P_\theta|_{\mathcal{F}_t}\}_{\theta \in \Theta}$ of $P_\theta$ to $\mathcal{F}_t$ are dominated by some $\sigma$-finite measure $\nu_t$ on $\mathcal{F}_t$. In this case, the Neyman factorization criterion of sufficiency applies \citep{bahadur1954sufficiency, halmos1949application}: 
\begin{align*}
    \frac{dP_\theta|_{\mathcal{F}_t}}{d\nu_t}(x) = h_t(x)g_t(\theta ;S_t(x)) = h_t(x)g_t^*(\theta;S_1(x), \ldots, S_t(x)),
\end{align*}
where $g_t^*(\theta; s_1, \ldots, s_t) := g_t(\theta; s_t)$. This implies that $(S_1, \ldots, S_t)$ is also sufficient for $\Theta$ in the $\mathcal{F}_t$-experiment. To summarize: If the family $\{P_\theta\}_{\theta\in\Theta}$ is locally dominated, then $\calS_t = \sigma(S_1,\dots,S_t)$ defines a sufficient filtration, and the conclusion of Theorem~\ref{thm:rb-eproc} applies to it. 

On the other hand, for Theorem~\ref{thm:rb-eproc} to be non-trivial, $\calS_t$ should be a coarser filtration than $\calF_t$ (i.e., a strict subset). 
Consider a sequentialized version of Example~\ref{ex:bern-naive-eval}, where $X_t \sim \text{Ber}(p)$ and $S_t = \sum_{i=1}^t X_i$ is sufficient at time $t$. Then clearly $\calS_t = \calF_t$ so $G_t = E_t$ and Rao-Blackwellization does not buy us anything. 
There are many natural situations in which $\calS_t\subsetneq\calF_t$, however. The MLE in linear regression is one such example (see Section~\ref{sec:applications}). 
One could also imagine starting the process after a fixed-time, say $M$, so that $\calS_t = \sigma(S_M,\dots,S_t)$. This might correspond to a researcher pledging to not look at the data until after some initial ``burn-in period.''

\begin{remark}
Given an e-process $(E_t)$ and a sequence of 
sufficient statistics $(S_t)$ for $\Theta$ corresponding to $(\Omega,\calF_t)$, a natural thought is to define the Rao-Blackwellization of $E_t$ as $G_t = \E_\Theta[E_t \mid  S_t]$. Theorem~\ref{thm:rb-evals} then implies that 
$\E_\theta[f(G_t)] \geq \E_\theta[f(E_t)]$ for all $t\geq 0$. However, from this we cannot conclude that $(G_t)$ is an e-process for $\Theta_0$, nor that $\E_\theta[f(G_\tau)] \geq \E_\theta[f(G_\tau)]$ for stopping times $\tau$. It is for this reason that Theorem~\ref{thm:rb-eproc} conditions on sufficient filtrations instead. 
\end{remark}

\subsection{Extension to compound and asymptotic e-variables}
\label{sec:compound}

Next let us note that our results extend easily to compound and asymptotic e-variables~\citep{ignatiadis2024asymptotic}. 
Compound e-variables are an important tool in multiple testing; in fact, any procedure which controls the false discovery rate can be recovered (or improved) using the e-BH procedure with some set of compound e-variables~\citep[Theorem 4.4]{ignatiadis2024asymptotic}. 

Consider $K$ (not necessarily disjoint) null hypotheses $\Theta_0^1,\dots,\Theta_0^K$ and set $\Theta_0 = \cup_j \Theta_0^j$. A \emph{compound} e-variable for $\Theta_0^1,\dots,\Theta_0^K$ is a tuple of nonnegative random variables $(E_1,\dots,E_K)$ such that  
\begin{equation}
\label{eq:compound-eval}
  \sum_{k:\theta\in\Theta_0^k} \E_\theta[E_k]\leq K,\quad \text{for all } \theta\in\Theta_0.  
\end{equation}
For $j=1,\dots,K$, let $S_j$ be a sufficient statistic for $\Theta_j\equiv \Theta_0^j \cup \Theta_1$ and define $G_j = \E_{\Theta_j} [E_j \mid S_j]$. We again use the notation $\E_{\Theta_j}[\cdot \mid S_j]$ since the quantity is independent of $\theta$ by virtue of $S_j$ being sufficient.  
Since $\E_\theta[G_j] = \E_\theta[E_j]$ for any $\theta\in \Theta_j$,~\eqref{eq:compound-eval} holds for $G_1,\dots,G_K$, meaning the collection is a compound e-variable. Further, $\E_\theta[f(G_j)] \geq \E_\theta[f(E_j)]$ for all $\theta\in \Theta_1 \cup \Theta_0^j$ and concave utility functions $f$ by Theorem~\ref{thm:rb-evals} and we can likewise apply Theorem~\ref{thm:rb-log-util-general}. We summarize as follows.  

\begin{corollary}
\label{cor:compound}
    Let $(E_1,\dots,E_K)$ be a compound e-variable for the null hypotheses $\Theta_0^1,\dots,\Theta_0^K$. For $j=1,\dots,K$, let $S_j$ be a sufficient statistic for $\Theta_0^j\cup \Theta_1$ and define $G_j = \E_\theta[E_j \mid  S_j]$. Then $\E_\theta[f(G_j)] \geq \E_\theta[f(E_j)]$ for any concave $f$ and $\theta\in \Theta_1 \cup \Theta_0^j$, and 
    \begin{equation}
      \sum_{k:\theta\in\Theta_0^k\cup \Theta_1} \E_\phi[f(G_j)]\geq \sum_{k:\theta\in\Theta_0^k\cup \Theta_1} \E_\phi[f(E_j)],\textnormal{~ for all } \theta\in \Theta_0 \cup \Theta_1, 
    \end{equation}
    assuming all expectations are well-defined. Moreover, $\E_\theta[\log(E_j/G_j)]\leq 0$ for all $j$ and $\theta\in\Theta_0^j\cup\Theta_1$. 
\end{corollary}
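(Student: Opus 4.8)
The plan is to reduce everything to a coordinate-wise application of Theorems~\ref{thm:rb-evals} and~\ref{thm:rb-log-util-general}, together with a direct verification that Rao-Blackwellization preserves the compound e-variable constraint~\eqref{eq:compound-eval}. The one genuine subtlety to keep in mind is that the individual coordinates $E_j$ of a compound e-variable need \emph{not} be e-variables for $\Theta_0^j$ on their own---only the aggregate inequality~\eqref{eq:compound-eval} is assumed. Consequently I cannot invoke the e-variable forms of the two theorems directly; instead I will rely on Remark~\ref{rem:any-variable}, which records that the conclusions of Theorem~\ref{thm:rb-evals} (and, by the note following it, of Theorem~\ref{thm:rb-log-util-general}) hold for \emph{any} nonnegative random variable, e-variable or not. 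Since a compound e-variable is by definition a tuple of nonnegative random variables, each $E_j$ qualifies.

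First I would verify that $(G_1,\dots,G_K)$ is again a compound e-variable. Fix $\theta\in\Theta_0$ and any index $k$ with $\theta\in\Theta_0^k$. Because $\theta\in\Theta_0^k\subseteq\Theta_0^k\cup\Theta_1$ and $S_k$ is sufficient for $\Theta_0^k\cup\Theta_1$, the universal version $G_k$ agrees with $\E_\theta[E_k\mid S_k]$ $P_\theta$-a.s., so by the tower property $\E_\theta[G_k]=\E_\theta[\E_\theta[E_k\mid S_k]]=\E_\theta[E_k]$. Summing over the relevant $k$ and invoking~\eqref{eq:compound-eval} for $(E_1,\dots,E_K)$ gives $\sum_{k:\theta\in\Theta_0^k}\E_\theta[G_k]=\sum_{k:\theta\in\Theta_0^k}\E_\theta[E_k]\leq K$, so the constraint is preserved.

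Next, for the per-coordinate utility improvement, I would fix $j$ and any $\theta\in\Theta_0^j\cup\Theta_1$. Since $S_j$ is sufficient for $\Theta_0^j\cup\Theta_1$ and $E_j$ is nonnegative, Lemma~\ref{lem:general-jensen-ineq}---equivalently Theorem~\ref{thm:rb-evals} read through Remark~\ref{rem:any-variable} with $\Theta$ taken to be $\Theta_0^j\cup\Theta_1$---yields $\E_\theta[f(G_j)]\geq\E_\theta[f(E_j)]$ for any concave $f$, whenever both sides are defined. The displayed aggregate inequality then follows simply by summing these coordinate-wise inequalities over all $k$ with $\theta\in\Theta_0^k\cup\Theta_1$. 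Finally, applying Theorem~\ref{thm:rb-log-util-general} (again via its nonnegativity-only form) to the pair $E_j,G_j$ with sufficient statistic $S_j$ delivers $\E_\theta[\log(E_j/G_j)]\leq 0$ for each $j$ and every $\theta\in\Theta_0^j\cup\Theta_1$.

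I expect no serious obstacle here: the result is essentially a bookkeeping exercise over the $K$ coordinates, with linearity of the sum doing the aggregation. The only point requiring care---and the one I would flag explicitly---is the non-triviality that $E_j$ is not itself an e-variable, which is precisely why the proof must route through the nonnegative-random-variable strengthenings in Remark~\ref{rem:any-variable} rather than through the e-variable statements of the theorems; everything else is a direct invocation of the two preceding results.
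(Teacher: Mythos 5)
Your proposal is correct and follows essentially the same route as the paper: verify that the compound constraint~\eqref{eq:compound-eval} is preserved because $\E_\theta[G_k]=\E_\theta[E_k]$ by sufficiency and the tower property, then apply Theorems~\ref{thm:rb-evals} and~\ref{thm:rb-log-util-general} coordinate-wise with $\Theta=\Theta_0^j\cup\Theta_1$, summing over coordinates for the aggregate inequality. Your explicit routing through Remark~\ref{rem:any-variable}---needed since a coordinate $E_j$ of a compound e-variable need not itself be an e-variable for $\Theta_0^j$---is a point the paper leaves implicit in this corollary (stating it explicitly only for Corollary~\ref{cor:asymptotic}), so flagging it is added rigor rather than a different approach.
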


While this result is reminiscent of the “conditional calibration’’ approach of \citet{lee2024boosting} for improving e-BH, the target of the conditioning is different. In our case, we Rao–Blackwellize the (compound) e-variables themselves with respect to a sufficient statistic. By contrast, \citet{lee2024boosting} do not condition the e-variables directly. Instead, for each null they condition the per-hypothesis FDR contribution on a (null-)sufficient statistic and then define boosted e-variables indirectly via this conditionally calibrated contribution.

Finally, we turn to asymptotic e-variables, also introduced by \citet{ignatiadis2024asymptotic}. 
An asymptotic e-variable is a sequence of nonnegative random variables $(E_n)$ such that $\sup_{\theta\in \Theta_0}\limsup_{n\to\infty} \E_{\theta} [E_n] \leq 1$. A \emph{uniform} asymptotic e-variable is a sequence $(E_n)$ such that $\limsup_{n\to\infty} \sup_{\theta\in \Theta_0}\E_{\theta} [E_n] \leq 1$. The latter condition is much stronger than the former as the limit must hold uniformly over $\Theta_0$. 

As in Section~\ref{sec:e-proc} we assume that $(\Omega,\calF)$ admits a filtration $(\calF_n)_{n\geq 0}$. 
Let $(S_n)$ be a sequence of sufficient statistics, where $S_n$ is sufficient for
$\Theta$ with respect to $\calF_n$. Applying Theorem~\ref{thm:rb-evals} and~\ref{thm:rb-log-util-general} to each $E_n$ (and recalling that the concave
improvement does not require $E_n$ to be an e-variable; see Remark~\ref{rem:any-variable}) yields the following result. 

\begin{corollary}
\label{cor:asymptotic}
    Let $(S_n)$ be a sequence of sufficient statistics for $\Theta$ on $(\Omega,(\calF_n))$. Let $(E_n)$ be a (uniform) asymptotic e-variable. 
    For each $n$, set $G_n = \E_\Theta[E_n \mid S_n]$. Then (i) $(G_n)$ is a (uniform) asymptotic e-variable and (ii) $\E_\theta[f(G_n)] \geq \E_\theta[f(E_n)]$ for all concave $f$, assuming both expectations are defined. Moreover, $\E_\theta[\log(E_n/G_n)]\leq 0$ for all $n$ and $\theta\in\Theta$. 
\end{corollary}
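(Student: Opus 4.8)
The plan is to reduce the whole statement to a single, $n$-by-$n$ application of the results already proved, exploiting that sufficiency makes conditioning expectation-preserving. The one identity doing all the work is that, for each fixed $n$, since $S_n$ is sufficient for $\Theta$ in the $\calF_n$-experiment and $E_n$ is nonnegative, the tower property gives
\begin{equation*}
    \E_\theta[G_n] = \E_\theta[\E_\theta[E_n \mid S_n]] = \E_\theta[E_n] \qquad \text{for every } \theta\in\Theta,
\end{equation*}
an equality in $[0,\infty]$ that holds even when the common value is infinite. Since $G_n$ is moreover nonnegative (being the conditional expectation of a nonnegative random variable), $(G_n)$ is a legitimate candidate for (uniform) asymptotic e-variable status. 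For part (i) I would then simply substitute this identity into the two definitions: because $\E_\theta[G_n]$ and $\E_\theta[E_n]$ agree for every $n$ and every $\theta\in\Theta_0$, the corresponding expressions built from them coincide exactly, giving $\sup_{\theta\in\Theta_0}\limsup_n \E_\theta[G_n] = \sup_{\theta\in\Theta_0}\limsup_n \E_\theta[E_n]\leq 1$ in the asymptotic case and $\limsup_n \sup_{\theta\in\Theta_0}\E_\theta[G_n] = \limsup_n\sup_{\theta\in\Theta_0}\E_\theta[E_n]\leq 1$ in the uniform case. Hence $(G_n)$ inherits whichever property $(E_n)$ had.

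For parts (ii) and (iii) I would fix $n$ and invoke the earlier theorems directly. The only point requiring attention is that a single $E_n$ need not itself be an e-variable---only the sequence is asymptotically one---so Theorem~\ref{thm:rb-evals} cannot be cited verbatim. This is precisely where Remark~\ref{rem:any-variable} enters: both Theorem~\ref{thm:rb-evals} and Theorem~\ref{thm:rb-log-util-general} remain valid for an arbitrary nonnegative random variable. Applying the former to $E_n$ with sufficient statistic $S_n$ yields $\E_\theta[f(G_n)]\geq \E_\theta[f(E_n)]$ for every concave utility $f$ (whenever both sides are defined), and applying the latter yields $\E_\theta[\log(E_n/G_n)]\leq 0$, for each $n$ and each $\theta\in\Theta$.

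I do not expect a genuine obstacle, since the content lies entirely in observing that the asymptotic definitions are assembled from the per-$n$ quantities $\E_\theta[E_n]$, which sufficiency leaves unchanged. The one place I would tread carefully is the non-integrable regime: I would state the expectation-preserving identity in $[0,\infty]$, and for the concave-utility comparison I would explicitly carry the standing hypothesis that both $\E_\theta[f(G_n)]$ and $\E_\theta[f(E_n)]$ are defined. Unlike the log-ratio bound---which Theorem~\ref{thm:rb-log-util-general} delivers unconditionally---the concave-utility inequality can fail to be meaningful when neither side is defined, as Example~\ref{ex:cauchy} illustrates; keeping this caveat visible is the only subtlety worth flagging.
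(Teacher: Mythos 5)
Your proposal is correct and follows essentially the same route as the paper: the paper's (very brief) justification is exactly to apply Theorem~\ref{thm:rb-evals} and Theorem~\ref{thm:rb-log-util-general} to each $E_n$ separately, invoking Remark~\ref{rem:any-variable} because an individual $E_n$ need not be an e-variable, with part (i) following from the sufficiency identity $\E_\theta[G_n]=\E_\theta[E_n]$ plugged into the (uniform) asymptotic definitions. Your added care about the non-integrable regime and the standing well-definedness hypothesis is consistent with, and slightly more explicit than, the paper's treatment.
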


\section{Applications}
\label{sec:applications}
In this section, we present three applications of 
Theorems~\ref{thm:rb-evals} and~\ref{thm:rb-log-util-general}. The first two applications illustrate how Rao-Blackwellization can be used to find both GRO and GROW e-variables~\citep{grunwald2024safe}. While we have discussed these concepts informally, let us define them here. 

Let $\calE$ be the set of e-variables for the null hypothesis $\Theta_0$. 
For a point alternative $\Theta_1 = \{\theta_1\}$, the GRO e-variable is that which maximizes the expected log-utility under the alternative: 
\[\sup_{E\in\calE} \E_{\theta_1}[\log(E)].\]
When the alternative is composite one needs to decide how to compare e-variables under parameters $\theta_1\in\Theta_1$. One option is to maximize log-utility in the worst case. This leads to the GROW e-variable, which attains the worst case log-utility: 
\[
\sup_{E\in\calE} \inf_{\theta_1\in\Theta_1 }\E_{\theta_1}[\log(E)].
\]
Section~\ref{sec:regression} will use Rao-Blackwellization to recover the GRO e-variable in a linear regression setting, and Section~\ref{sec:pareto-grow} will recover the GROW e-variable under heavy-tailed data. Finally, in Section~\ref{sec:permutation}, we will show how any e-variable for i.i.d. data can be improved by taking a weighted mean of the e-variable recalculated on permuted data.

\subsection{GRO e-variable for linear regression}
\label{sec:regression}

Consider a standard fixed-design setup in which $Y \sim N(X\theta, \sigma^2 I_n)$ where $Y= (Y_1,\dots,Y_n)$ and $X \in \Re^{n\times k}$ is fixed. 
We assume that $\sigma^2$ is known. Write $\theta = (\theta_a,\theta_b)\in \Re^d \times \Re^{k-d}$. 
Suppose we are interested in testing whether $\theta_a=0$, where we treat $\theta_b$ as nuisance parameters. In particular, our null is whether $\theta = (0,\theta_b)\in\Re^d \times \Re^{k-d}$ for some $\theta_b$ and the alternative is whether $\theta = \theta^*$ for some fixed $\theta^* = (\theta_a^*,\theta_b^*)$ with $\theta_a^*\neq 0$.  That is,  
\begin{equation}
    \Theta_0 = \{ (0,\theta_b)\in\Re^d\times \Re^{k-d}\}, \quad \Theta_1 = \{ \theta^*\}. 
\end{equation}
Let $\thetahat = (X^\top X)^{-1}X^\top Y$ be the MLE. It is well-known that $\thetahat$ is a sufficient statistic for $\Theta_0 \cup \Theta_1$ if $X$ is full-rank.  
Now, under $\Theta_0$, we have $\thetahat\sim N(\theta, \sigma^2(X^\top X)^{-1})$ where $\theta = (0,\theta_b)$ and under $\Theta_1$, $\thetahat \sim N(\theta^*, \sigma^2(X^\top X)^{-1})$. In other words, examining the distribution of $\thetahat$ induces a new hypothesis test, namely that of testing a Gaussian location family null against a Gaussian alternative with the same variance. Let us call this the ``reduced experiment,'' to be contrasted with the ``original experiment.''

Let $E = E(Y)$ be an e-variable in the original experiment. By Theorem~\ref{thm:rb-evals}, we may assume that $E$ is a function of $\thetahat$. Hence $E(Y) = f(\thetahat(Y))$ for some measurable $f$. That is, there is a bijection between these e-variables in the original experiment with e-variables in the reduced experiment: an e-variable $H(\thetahat)$ in the reduced is identified with $E(Y) = H(\thetahat(Y))$ in the original.

Using known results on e-variables for exponential families~\citep[Section 4.1.1]{grunwald2024optimal}, we can conclude that the log-optimal e-variable for the reduced problem is 
\[H(\thetahat) = \frac{\rho(\thetahat; \theta^*,\sigma^2(X^\top X)^{-1})}{\rho(\thetahat; \overline{\theta},\sigma^2(X^\top X)^{-1})},\]
where $\rho(\cdot;m,\Sigma)$ is the Gaussian density with mean $m$ and covariance $\Sigma$ and $\overline{\theta}$ is a particular element of $\Theta_0$. 
The GRO e-variable for the original problem is therefore 
\[E(Y) = H(\thetahat(Y)).\]
For the interested reader, we spell out this example in a bit more detail in Appendix~\ref{app:linear-reg}. 

It is important to note that we have not recovered the results of \citet[Section 4.4]{grunwald2024optimal} on linear regression, because we assume that the variance is fixed instead of it being a free parameter.

\subsection{GROW e-variable for Pareto data} \label{sec:pareto-grow}

Let $X_1, \ldots, X_n$ be i.i.d. $\Par(m, \alpha)$, $m \in (0,\infty)$, $\alpha > 0$. That is, $(X_1, \ldots, X_n)$ has the joint Lebesgue density $p_{m,\alpha}(x_1, \ldots, x_n)$, given by
\[
\prod_{i=1}^n \frac{\alpha m^\alpha}{x_i^{\alpha+1}}\ind\{x_i \geq m\} = \alpha^nm^{n\alpha}\exp\left(-(\alpha+1)\sum_{i=1}^n \log\frac{x_i}{x_{(1)}}\right)\ind\{x_{(1)} \geq m\},
\]
where $x_{(1)} := \min(x_1, \ldots, x_n)$. By the Neyman factorization criterion for sufficiency, the statistic $(X_{(1)}, \sum_{i=1}^n\log(X_i/X_{(1)}))$ is sufficient for $(m, \alpha)$. Let the null $\mathcal{P}_{\alpha_0}$ be given by $H_0:\alpha = \alpha_0$, and the alternative $\mathcal{P}_{\alpha_1}$ by $H_1:\alpha = \alpha_1$. A straightforward computation shows that for any $P_{m, \alpha_1} \in \mathcal{P}_{\alpha_1}$, $\KL(P_{m, \alpha_1} \| P_{m, \alpha_0}) < \infty$, so by Lemma~\ref{lem:log-utility-defined} we find that the GROW criterion is well-defined for this problem. Furthermore, Theorem~\ref{thm:rb-evals} yields
\[
\sup_{E \in \mathcal{E}}\inf_{m \in (0,\infty)}\E_{m, \alpha_1}[\log(E)] = \sup_{E \in \mathcal{E'}}\inf_{m \in (0,\infty)}\E_{m, \alpha_1}[\log(E)],  
\]
where $\mathcal{E}'$ is the set of all e-variables that can be written as a function of the sufficient statistic. \citet{malik1970estimation} shows that 
\[
    \left(X_{(1)}, \sum_{i=1}^n\log\frac{X_i}{X_{(1)}}\right) \sim \Par(m, n\alpha) \otimes \Gamma(n-1, \alpha), 
\]
where $\Gamma(n-1, \alpha)$ denotes the Gamma distribution with shape parameter $n-1$ and rate parameter $\alpha$. Consequently, the statistic $\sum_{i=1}^n\log(X_i/X_{(1)})$ has the same distribution under all elements of $\mathcal{P}_{\alpha_0}$ and $\mathcal{P}_{\alpha_1}$, respectively, and its likelihood ratio is given by
\[
E^* :=\frac{p_{\alpha_1}\left(\sum_{i=1}^n\log\frac{X_i}{X_{(1)}}\right)}{p_{\alpha_0}\left(\sum_{i=1}^n\log\frac{X_i}{X_{(1)}}\right)} = \left(\frac{\alpha_1}{\alpha_0}\right)^{n-1}\exp\left(-(\alpha_1-\alpha_0)\sum_{i=1}^n\log\frac{X_i}{X_{(1)}}\right),
\]
and is an e-variable. This e-variable is indeed an element of $\mathcal{E}'$, and an intuitive candidate for being a GROW e-variable, as the distribution of the entire sufficient statistic depends on the nuisance parameter only via $X_{(1)}$. In fact, it can directly be shown to be GROW under a very weak additional assumption, namely that $n\alpha_1 > 1$. The steps are tedious, and are therefore delegated to Appendix~\ref{proof:pareto-grow}

\subsection{Permutation-based e-variables} \label{sec:permutation}
Let $X_1, \ldots, X_n \in \mathbb{R}$ be a sequence of data that is i.i.d.--or more general, exchangeable--under both the null and alternative. The order statistics $S = (X_{(1)}, \ldots, X_{(n)})$ can be shown the be sufficient, and 
\[
\E_\Theta[f(X_1, \ldots, X_n) \mid S] = \frac{1}{n!}\sum_{\pi}f(X_{\pi(1)}, \ldots, X_{\pi(n)}),
\]
where $\pi$ runs over the set of permutations on $\{1, \ldots, n\}$. Thus, according to Theorems~\ref{thm:rb-evals} and \ref{thm:rb-log-util-general}, the e-variable $E(X_1, \ldots, X_n)$ can be improved in terms of expected concave utility by taking its Rao-Blackwellization
\[
G = \frac{1}{n!}\sum_\pi E(X_{\pi(1)}, \ldots, X_{\pi(n)})
\]
instead.

Whenever $E$ is invariant under taking permutations of the data, $G$ and $E$ will coincide, meaning that nothing can be gained in terms of expected utility. This does, however, indicate that GRO e-variables for testing i.i.d. data are invariant under taking permutations of the data, because GRO e-variables are essentially unique. 
Thus, when assessing whether an e-variable is GRO for i.i.d. data, it is almost always wise to verify that it is permutation invariant, if only for the sake of a sanity check.

At first glance, this setting and result might appear to be oddly reminiscent of earlier work on testing for exchangeability by \citet{larsson2025constraints, koning2025exchangeability}. However, our situation is different because we assume that the data are i.i.d. (exchangeable) under both the null and alternative hypotheses, while the referenced authors demonstrate how to test this assumption using e-variables. Our result is closer to \citet[Proposition 5.3]{ramdas2025hypothesis}.

\section{Summary}

We have investigated how the theory of Rao-Blackwellization applies to e-statistics, proving simple yet powerful results demonstrating how conditioning on a sufficient statistic can improve the expected utility of e-values and e-processes for any concave utility function.  We gave several examples demonstrating how our results help recover optimal e-variables, and we hope that they prove useful for e-value methodology in the future.

{\small 
\DeclareRobustCommand{\sortandprefix}[3]{#3} 
\bibliographystyle{abbrvnat}
\bibliography{main}
}

\appendix
\section{A general version of Jensen's inequality}
\label{sec:jensen}
In this section, we present and prove a general version of Jensen's inequality for nonnegative (and potentially nonintegrable) random variables $X: \Omega \to [0, \infty]$. A similar result for nonnegative concave utility functions has already been established in~\citet[Proposition A.7]{wang2025extended}. However, since we are working with general concave utility functions in this paper, we must generalize this result to its fullest extent.

We consider the concave function $f$ to be defined as a function from $(0, \infty)$ to $\mathbb{R}$, and we extend the domain of $f$ to $[0, \infty]$ by defining $f(0) := \lim_{x\downarrow 0}f(x)$ and $f(\infty) := \lim_{x\to \infty}f(x)$, which exist as elements of $[-\infty, \infty]$. Because $f$ is continuous on $(0, \infty)$, this definition ensures that $f$ is also continuous as a function from $[0, \infty]$ to $[-\infty, \infty]$, provided that we endow $[0, \infty]$ and $[-\infty, \infty]$ with their respective order topologies. (The latter ensures that we can always take limits inside $f$ during the proof of the following lemma, which we will do without further comment.) 

Recall that the expectation of a random variable $Y:\Omega \to [-\infty, \infty]$ is defined if $\min(\E[Y^+], \E[Y^-]) < \infty$, where $x^+ = \max(0, x)$, and $x^- = \max(0, -x)$ respectively denote the positive and negative parts of $x$. This condition ensures that $\E[Y] := \E[Y^+]-\E[Y^-]$ is well-defined as an element of $[-\infty, \infty]$. Furthermore, conditional expectations can, then, also be defined as $\E[Y \mid  \mathcal{G}] := \E[Y^+ \mid  \mathcal{G}] - \E[Y^- \mid  \mathcal{G}]$, which almost surely takes its values in $[-\infty, \infty]$. For a full treatment on general (conditional) expectation, we refer to \citet{shiryaev2016probability}.
\begin{lemma} \label{lem:general-jensen-ineq}
    Let $X$ be a nonnegative random variable, and $f$ a concave function as described above. If $\E[f(X)]$ is defined, then for any $\sigma$-field $\mathcal{G} \subset \mathcal{F}$,
    \[
    \E[f(X) \mid  \mathcal{G}] \leq f(\E[X \mid  \mathcal{G}]) \quad \text{a.s.}
    \]
    Additionally, if $\E[f(\E[X \mid  \mathcal{G}])]$ is also defined, then $\E[f(X)] \leq \E[f(\E[X \mid  \mathcal{G}])]$.
\end{lemma}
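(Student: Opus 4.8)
The plan is to first reduce the unconditional inequality to the conditional one, and then to prove the conditional inequality by writing $f$ as a countable infimum of affine functions on the open half-line while disposing of the two boundary values of $\E[X\mid\mathcal{G}]$ (namely $0$ and $\infty$) by hand. Throughout write $M := \E[X\mid\mathcal{G}]$, which is a $\mathcal{G}$-measurable, $[0,\infty]$-valued random variable because $X\geq 0$.

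\textbf{Reduction.} Suppose the conditional bound $\E[f(X)\mid\mathcal{G}]\leq f(M)$ has been shown to hold a.s. Since $\E[f(X)]$ is assumed defined, the tower property for generalized expectations gives $\E[f(X)]=\E[\E[f(X)\mid\mathcal{G}]]$, and monotonicity of the (defined) expectation together with the added assumption that $\E[f(M)]$ is defined yields $\E[f(X)]\leq\E[f(M)]$. Thus the entire content of the lemma is the conditional statement.

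\textbf{The representation of $f$.} On the open interval $(0,\infty)$ I would use that a concave function is the infimum of its supporting lines: for each rational $q\in(0,\infty)$ pick one supporting line $\ell_q(x)=a_qx+b_q\geq f(x)$, and enumerate this countable family as $\{\ell_n\}_n$. Then $f(c)=\inf_n\ell_n(c)$ for every $c\in(0,\infty)$; the direction $f\leq\ell_n$ is the supporting property, and the reverse follows because subgradients of a concave function are locally bounded, so $\ell_q(c)\to f(c)$ along rationals $q\to c$. I would claim this \emph{only} on the open interval, since the representation can genuinely fail at $\infty$ (e.g.\ for $f(x)=-e^{-x}$ every supporting line tends to $+\infty$, whereas $f(\infty)=0$).

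\textbf{Conditional inequality via a three-way split on $M$.} On $\{M=0\}$ (a $\mathcal{G}$-set), since $X\geq 0$ and $\E[X\ind\{M=0\}]=\E[M\ind\{M=0\}]=0$, we get $X=0$ a.s.\ there, so $f(X)=f(0)=f(M)$ a.s.\ and the inequality is an equality. On $\{M=\infty\}$: if $f(\infty)=+\infty$ the bound $\E[f(X)\mid\mathcal{G}]\leq f(\infty)$ is vacuous, while if $f(\infty)<\infty$ then concavity forces $f$ to be nondecreasing (a concave function with a finite limit at $\infty$ cannot have a point of negative one-sided derivative, else it would decrease to $-\infty$), so $f\leq f(\infty)$ everywhere and the bound again holds. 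On $\{0<M<\infty\}$ I use the representation: for each $n$ the pointwise bound $f(X)\leq a_nX+b_n$ and monotonicity of conditional expectation give $\E[f(X)\mid\mathcal{G}]\leq a_nM+b_n=\ell_n(M)$ a.s.; intersecting the countably many full-measure sets and taking the infimum over $n$ at the interior point $M$ gives $\E[f(X)\mid\mathcal{G}]\leq f(M)$ a.s. Recombining the three $\mathcal{G}$-measurable pieces yields the claim.

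\textbf{Main obstacle.} The delicate points are the bookkeeping with generalized (possibly infinite, nonintegrable) conditional expectations. I must verify that $\E[a_nX+b_n\mid\mathcal{G}]$ is defined and equals $a_nM+b_n$ with no $\infty-\infty$: for $a_n\geq 0$ the negative part of $a_nX+b_n$ is bounded by $b_n^-$, and for $a_n<0$ the positive part is bounded by $b_n^+$, so one part is integrable and linearity of the generalized conditional expectation applies. Equally important is that the boundary events $\{M=0\}$ and $\{M=\infty\}$ are settled \emph{without} invoking the infimum representation, since that representation need not extend to the endpoints. This endpoint handling, rather than the routine supporting-line argument on $(0,\infty)$, is where the full generality (infinite-valued $X$, and $f(0)$ or $f(\infty)$ equal to $\pm\infty$) is actually absorbed.
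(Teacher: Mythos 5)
Your route---representing $f$ by its countable family of supporting lines on the open half-line and disposing of the endpoints of $\E[X\mid\mathcal{G}]$ by hand---is genuinely different from the paper's proof, which instead truncates $X$ to $(X+\varepsilon)\wedge\delta$, applies ordinary Jensen to the bounded variable, and passes to the limit via monotone/dominated convergence in a case analysis on the behavior of $f$ at $0$ and $\infty$ (plus a separate $f\wedge n$ argument when only $\E[f(X)^-]<\infty$). Your tower-property reduction, your treatment of $\{M=0\}$, and your argument on $\{0<M<\infty\}$ (including the bookkeeping that makes $\E[a_nX+b_n\mid\mathcal{G}]=a_nM+b_n$ legitimate with no $\infty-\infty$) are all sound, and where your approach works it is arguably more elementary than the paper's.

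However, there is a genuine gap on $\{M=\infty\}$. You split into ``$f(\infty)=+\infty$'' and ``$f(\infty)<\infty$'', and in the second branch you claim concavity forces $f$ to be nondecreasing, hence $f\leq f(\infty)$ everywhere. That implication is valid only when $f(\infty)$ is \emph{finite}: the case $f(\infty)=-\infty$ (e.g.\ $f(x)=-x$, or $f(x)=\log x - x$) lands in your second branch, yet such $f$ is not nondecreasing and the asserted bound would read $f\leq-\infty$, which is false. Your parenthetical justification (``a concave function with a \emph{finite} limit at $\infty$ cannot have a negative one-sided derivative'') silently excludes exactly this case, so it falls through the cracks; note that the paper's own case analysis isolates $\lim_{x\to\infty}f(x)=-\infty$ as its separate cases 2 and 3, so the case is not exotic. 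What must be shown there is that $\E[f(X)\mid\mathcal{G}]=-\infty$ a.s.\ on $\{M=\infty\}$, and this is true, but it needs an argument. Fortunately your own machinery closes the hole: since $f(x)\to-\infty$, the right derivative of $f$ is negative at some point, hence at all larger rational points, so your countable family contains a supporting line $\ell_n$ with $a_n<0$; the pointwise bound $f(X)\leq a_nX+b_n$ (which extends to $X\in\{0,\infty\}$ by taking limits) and monotonicity then give
\[
\E[f(X)\mid\mathcal{G}]\;\leq\; a_n M + b_n \;=\;-\infty\;=\;f(M) \quad \text{a.s.\ on } \{M=\infty\}.
\]
With this extra case inserted your proof is complete; as written, the case analysis is incorrect.
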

\begin{proof}
    We first assume that $\E[f(X)^+] < \infty$. Define, for $0 < \varepsilon < \delta$
    \[
    X^\varepsilon := X + \varepsilon, \quad X_\delta := X\wedge \delta, \text{ and} \quad X_\delta^\varepsilon := (X+\varepsilon)\wedge \delta,
    \]
    and note that
    \[
     X^\varepsilon \downarrow X \text{ as $\varepsilon \downarrow 0$,} \quad X_\delta \uparrow X \text{ as $\delta \to \infty$,} \quad X_\delta^\varepsilon \downarrow X_\delta \text{ as $\varepsilon \downarrow 0$, and} \quad X_\delta^\varepsilon \uparrow X^\varepsilon \text{ as $\delta \to \infty$.}
    \]
    The random variable $X_\delta^\varepsilon$ takes its values in the compact set $[\varepsilon, \delta]$, on which $f$ has a minimum and a maximum. Thus, both $X_\delta^\varepsilon$ and $f(X_\delta^\varepsilon)$ are bounded, and Jensen's inequality for integrable random variables yields
    \begin{equation} \label{eq:Jensen-bounded-approximation}
        \E[f(X_\delta^\varepsilon) \mid  \mathcal{G}] \leq f(\E[X_\delta^\varepsilon\mid  \mathcal{G}]) \quad \text{a.s.}
    \end{equation}
    The desired result follows by letting $\varepsilon \downarrow 0$, and $\delta \to \infty$, in conjunction with appropriate limit theorems for conditional expectations. To that end, note that either one of the following three situations is the case:
    \begin{enumerate}
        \item $\lim_{x \to \infty}f(x) \in (-\infty, \infty]$;
        \item $\lim_{x \to \infty}f(x) = -\infty$, and $\lim_{x \downarrow 0}f(x) = -\infty$;
        \item $\lim_{x \to \infty}f(x) = -\infty$, and $\lim_{x\downarrow 0} f(x) \in \mathbb{R}$.
    \end{enumerate}
    We treat them seperately.
    \begin{enumerate}
        \item In the first situation, the function $f$ is monotonically nondecreasing. Thus, $f(X_\delta^\varepsilon)$ and $X_\delta^\varepsilon$ are bounded from below for fixed $\varepsilon$. Since $f(X_\delta^\varepsilon) \uparrow f(X^\varepsilon)$ and $X_\delta^\varepsilon \uparrow X^\varepsilon$ as $\delta \to \infty$, the monotone convergence theorem for (general) conditional expectations \citep[Chapter 2.7.4; Theorem 2]{shiryaev2016probability} yields
        \[
        \lim_{\delta \to \infty}\E[f(X_\delta^\varepsilon) \mid  \mathcal{G}] = \E[f(X^\varepsilon) \mid  \mathcal{G}], \text{ and} \quad \lim_{\delta \to \infty}f(\E[X_\delta^\varepsilon \mid  \mathcal{G}]) = f(\E[X^\varepsilon\mid  \mathcal{G}])\quad \text{a.s.}
        \]
    
    By the monotonicity of $f$, it follows that for $0<\varepsilon < \eta$, $f(x+\varepsilon) \leq f(x+\eta)$, and by the concavity of $f$, there exists a constant $M(\eta) > 0$, such that $f(x+\eta)^+ \leq f(x)^+ + M(\eta)$ for all $x$. Thus, $f(X^\varepsilon) \leq f(X^\eta) \leq f(X)^+ + M(\eta)$, so the family $(f(X^\varepsilon) )_{0<\varepsilon \leq \eta}$ is bounded from above by an integrable random variable. Since $f(X^\varepsilon) \downarrow f(X)$ as $\varepsilon \downarrow 0$, the monotone convergence theorem again yields 
    \[
        \lim_{\varepsilon \downarrow 0}\E[f(X^\varepsilon) \mid  \mathcal{G}] = \E[f(X) \mid  \mathcal{G}] \quad \text{a.s,}
    \]
    and
    \[
    \lim_{\varepsilon \downarrow 0}f(\E[X^\varepsilon\mid  \mathcal{G}]) = \lim_{\varepsilon \downarrow 0}f(\E[X\mid  \mathcal{G}] + \varepsilon) = f(\E[X\mid  \mathcal{G}]) \quad \text{a.s,}  
    \]
    follows directly. By combining all of these limit results with \eqref{eq:Jensen-bounded-approximation}, we find that
    \begin{align} \label{eq:Jensen-limits-in-between}
    \E[f(X)\mid  \mathcal{G}] = \lim_{\varepsilon \downarrow 0}\lim_{\delta \to \infty} \E[f(X_\delta^\varepsilon) \mid  \mathcal{G}]\leq \lim_{\varepsilon \downarrow 0}\lim_{\delta \to \infty}f(\E[X_\delta^\varepsilon\mid  \mathcal{G}]) =f(\E[X\mid  \mathcal{G}]) \quad \text{a.s,}
    \end{align}
    as desired.
    \item In the second situation, $f$ is bounded from above. Furthermore, $f(x)$ decreases eventually as $x \to \infty$, and as $x \downarrow 0$, so we may apply the monotone convergence theorem to \eqref{eq:Jensen-bounded-approximation} twice (in any order) to obtain \eqref{eq:Jensen-limits-in-between}.
    \item In the third situation, $f$ is again bounded from above, but it need not be true anymore that $f(x)$ eventually decreases as $x \downarrow 0$. However, because $f(x)$ converges to a finite limit as $x \downarrow 0$, it follows that the family $(f(X_\delta^\varepsilon))_{0 < \varepsilon \leq \eta}$ is uniformly bounded. This is also true for $(X_\delta^\varepsilon)_{0<\varepsilon \leq \eta}$. Whence, by the dominated convergence theorem \citep[Chapter 2.7.4; Theorem 2]{shiryaev2016probability}, 
    \end{enumerate}
    \[
    \lim_{\varepsilon \downarrow 0}\E[f(X_\delta^\varepsilon) \mid  \mathcal{G}] = \E[f(X_\delta) \mid  \mathcal{G}], \text{ and} \quad \lim_{\varepsilon \downarrow 0}\E[f(X_\delta^\varepsilon) \mid  \mathcal{G}] = f(\E[X_\delta \mid  \mathcal{G}]) \quad \text{a.s.}
    \]
    Now, by letting $\delta \to \infty$ and applying the monotone convergence theorem in a similar fashion as in the second situation, we again obtain \eqref{eq:Jensen-limits-in-between}.

    Suppose that $\E[f(X)^-] < \infty$. Then, the function $g_n(x) := f(x) \wedge n$ is bounded from above and concave. In particular, $\E[g_n(X)^+] < \infty$, so
    \[
    \E[g_n(X) \mid  \mathcal{G}] \leq g_n(\E[X \mid  \mathcal{G}]) \quad \text{a.s.}
    \]
    Clearly, $g_n(x) \uparrow f(x)$ as $n \to \infty$, and because $g_n(X)$ is bounded from below by the integrable random variable $-f(X)^-$, the monotone convergence theorem yields
    \[
    \E[f(X) \mid  \mathcal{G}] = \lim_{n \to \infty}\E[g_n(X) \mid  \mathcal{G}] \leq \lim_{n\to \infty}g_n(\E[X \mid  \mathcal{G}]) = f(\E[X \mid  \mathcal{G}]) \quad \text{a.s,}
    \]
    which is what we wanted to show.

    The final assertion follows from the tower property for general conditional expectations \citep[Chapter 2.7.4]{shiryaev2016probability}:
    \[
    \E[f(X)] = \E[\E[f(X) \mid  \mathcal{G}]] \leq \E[f(\E[X \mid  \mathcal{G}])],
    \]
    provided that the latter expectation is also defined.
\end{proof}

\section{Additional Details} 
\label{sec:omitted-proofs}

\subsection{\texorpdfstring{Section~\ref{sec:regression}}{Section Linear regression GRO}}
\label{app:linear-reg}

The original experiment can be phrased in terms of null and alternative hypotheses $H_0$ and $H_1$ as follows: 
\begin{align*}
    H_0&:  Y\sim N(X\theta, \sigma^2 I_n), \quad \theta \in \Theta_0 = \{(0,\theta_b)\in\Re^{d}\times\Re^{k-d}\},  \\ 
    H_1&:  Y\sim N(X\theta, \sigma^2I_n), \quad \theta\in\Theta_1=\{\theta^*\}.  
\end{align*}
If $X$ is full-rank, then the MLE $\thetahat = \thetahat(Y) = (X^\top X)^{-1}X^\top Y$ is a sufficient statistic for $\Theta = \Theta_0\cup \Theta_1$. Examining the distribution of $\thetahat$ under $H_0$ and $H_1$ induces a new null and alternative hypothesis $H_0^*$ and $H_1^*$: 
\begin{align*}
    H_0^*&: \thetahat\sim N(\theta, \sigma^2(X^\top X)^{-1}),\quad \theta\in\Theta_0\\ 
    H_1^*&: \thetahat\sim N(\theta, \sigma^2(X^\top X)^{-1}),\quad \theta \in\Theta_1. 
\end{align*}
This is the ``reduced'' experiment as described in Section~\ref{sec:regression}, which is equivalent to testing a Gaussian location family against a Gaussian distribution with a different mean but the same covariance. \citet{grunwald2024optimal} show that GRO e-variable for this problem takes the form of a simple versus simple likelihood ratio. In particular, the GRO e-variable for the reduced experiment is 
\[H(\thetahat) = \frac{\rho(\thetahat; \theta^*,\sigma^2(X^\top X)^{-1})}{\rho(\thetahat; \overline{\theta},\sigma^2(X^\top X)^{-1})},\]
where $\rho(\cdot;m,\Sigma)$ is the Gaussian density with mean $m$ and covariance $\Sigma$ and 
\begin{align*}
  \overline{\theta} &= \argmin_{\theta\in\Theta_0} \KL(N(\theta^*, \sigma^2 (X^\top X)^{-1}) \| N(\theta,\sigma^2(X^\top X)^{-1}))   \\ 
  &= \argmin_{\theta\in\Theta_0} (\theta^* - \theta)^\top \sigma^{-2} X^\top X (\theta^* - \theta). 
\end{align*}
Now, Theorem~\ref{thm:rb-log-util-general} implies that when searching for a GRO e-variable $E(Y)$ in the original experiment $H_0$ vs $H_1$, we may restrict our attention to those e-variables which are functions of $\thetahat$. But there is a one-to-one correspondence between such e-variables in the original experiment, and e-variables $H$ in the reduced experiment (which are already functions of $\thetahat$). Indeed, the relationship is simply given by the mapping $E(Y) = H(\thetahat(Y))$. The optimization problem for the GRO e-variable in both problems is thus identical up to this mapping, and we can conclude 
that the GRO e-variable in the original experiment is given by $H(\thetahat(Y))$. 

\subsection{\texorpdfstring{Section~\ref{sec:pareto-grow}}{Section Pareto GROW}} \label{proof:pareto-grow}
Because the GROW criterion can be taken over the set of e-variables that can be written as a function of the sufficient statistic, the original testing problem
\begin{align*}
    H_0:X_1, \ldots, X_n \overset{\text{i.i.d.}}{\sim} \Par(m, \alpha_0), \ m> 0,  \quad H_1: X_1, \ldots, X_n \overset{\text{i.i.d.}}{\sim} \Par(m, \alpha_1), \ m> 0,
\end{align*}
can be reduced to
\begin{align*}
    H_0^*:\left(X_{(1)}, \sum_{i=1}^n\log \frac{X_i}{X_{(1)}}\right) \sim \Par(m, n\alpha_0) \otimes \Gamma(n-1, \alpha_0), \ m>0, \\
    \quad H_1^*:\left(X_{(1)}, \sum_{i=1}^n\log \frac{X_i}{X_{(1)}}\right) \sim \Par(m, n\alpha_1) \otimes \Gamma(n-1, \alpha_1), \ m>0,
\end{align*}
because the set of e-variables for the latter null hypothesis is precisely the set $\mathcal{E}'$ of e-variables for the former null hypothesis that can be written as a function of the sufficient statistic. Consequently, the GROW e-variable for $H_0^*$ versus $H_1^*$ will be the GROW e-variable for $H_0$ versus $H_1$.

Now, let $P_{m, \alpha}$ denote a $\Par(m, \alpha)$ distribution, and $Q_{n-1, \alpha}$ a $\Gamma(n-1, \alpha)$ distribution. Furthermore, for any prior $W$ on $m$, we denote the Bayes marginal of $P_{m, \alpha}$ relative to $W$ by $P_{W, \alpha}$. Then,
\[
    \sup_{E \in \mathcal{E}}\inf_{m, \in (0,\infty)}\E_{m, \alpha_1}[\log(E)] = \sup_{E \in \mathcal{E}'}\inf_{m, \in (0,\infty)}\E_{m, \alpha_1}[\log(E)] = \sup_{E \in \mathcal{E}'}\inf_{W}\E_{W, \alpha_1}[\log(E)].
\]
By providing a similar argument as in Lemma~\ref{lem:log-utility-defined}, we find that for any e-variable $E \in \mathcal{E}'$, and any priors $W_0$ and $W_1$ on $m$, 
\begin{align*}
\E_{W_1, \alpha_1}[\log(E)] &\leq \KL(P_{W_1, n\alpha_1} \otimes Q_{n-1, \alpha_1} \| P_{W_0, n\alpha_0} \otimes Q_{n-1, \alpha_0}) \\
&= \KL(P_{W_1, n\alpha_1} \otimes \| P_{W_0, n\alpha_0} ) + \KL(Q_{n-1, \alpha_1} \| Q_{n-1, \alpha_0}),
\end{align*}
so
\[
\sup_{E \in \mathcal{E}'}\inf_{W}\E_{W, \alpha_1}[\log(E)] \leq   \KL(Q_{n-1, \alpha_1} \| Q_{n-1, \alpha_0}) +\inf_{W_0}\inf_{W_1}\KL(P_{W_1, n\alpha_1} \| P_{W_0, n\alpha_0} ) 
\]
The expected log-utility of $E^*$ attains the left-summand of this upper bound, so it suffices to show that the right summand is $0$.

To that end, let $W_u$ denote the $\Gamma(u,u)$-distribution with $u \in (0,\infty)$. It suffices to show that $\KL(P_{W_u, n\alpha_1} \| P_{W_u, n\alpha_0})$ tends to zero as $u \downarrow 0$. The density $f_{W_u, \alpha}$ of $X\sim P_{W_u, \alpha}$ is given by
\begin{align*}
        f_{W_u, \alpha}(x) &= \int_0^x \frac{m^\alpha a}{x^{\alpha+1}}\frac{u^u}{\Gamma(u)m^{u-1}}e^{-um}\mathrm{d}m \\
        &= \frac{\alpha u^u}{x^{\alpha+1}\Gamma(u)}\int_0^x m^{\alpha+u-1}e^{-um}\mathrm{dm} \\
        &= \frac{x^{\alpha+u}\alpha u^u}{x^{\alpha+1}\Gamma(u)}\int_0^1 y^{\alpha+u-1}e^{-uyx}\mathrm{d}y.
\end{align*}
We have that
\begin{align*}
    \KL(P_{W_u, n\alpha_1} \| P_{W_u, n\alpha_0}) = \E_{W_u, n\alpha_1}\left(\log \frac{f_{W_u, n\alpha_1}(X)}{f_{W_u, n\alpha_0}(X)}\right).
\end{align*}
Here,
\begin{align*}
    \log\frac{f_{W_u, n\alpha_1}(x)}{f_{W_u, n\alpha_0}(x)} = \log \left(\frac{\alpha_1 \int_0^1 y^{n\alpha_1+u-1}e^{-uyx}\mathrm{d}y }{\alpha_0 \int_0^1 y^{n\alpha_0+u-1}e^{-uyx}\mathrm{d}y}\right)\leq \log\left(\frac{n\alpha_1(n\alpha_0 + u)}{n\alpha_0(n\alpha_1+u)}\right) + ux,
\end{align*}
because 
\begin{align*}
    \frac{e^{-ux}}{\alpha+u} \leq \int_0^1 y^{\alpha+u-1}e^{-uyx}\mathrm{d}y \leq \frac{1}{\alpha+u}.
\end{align*}
Therefore,
\begin{align*}
    \KL(P_{W_u, n\alpha_1} \| P_{W_u, n\alpha_0}) \leq \log\left(\frac{n\alpha_1(n\alpha_0 + u)}{n\alpha_0(n\alpha_1+u)}\right) + u\E_{W_u, n\alpha_1}(X).
\end{align*}
The first summand tends to $0$ as $u \downarrow 0$, and 
\begin{align*}
     \E_{W_u, n\alpha_1}(X) = \E_{m \sim W_u}\left(\E_{m, n\alpha_1}(X)\right) = \E_{m \sim W_u}\left(\frac{n\alpha_1m}{n\alpha_1-1}\right) = \frac{n\alpha_1}{n\alpha_1-1},
\end{align*}
which exists because $n\alpha_1 > 1$ by assumption, so the second summand also tends to $0$ as $u \downarrow 0$. Thus,
\begin{align*}
    \lim_{u \downarrow 0} \KL(P_{W_u, n\alpha_1} \| P_{W_u, n\alpha_0}) = 0, 
\end{align*}
and therefore
\begin{align*}
    \inf_{W_0}\inf_{W_1}\KL(P_{W_1, n\alpha_1} \| P_{W_0, n\alpha_0} ) = 0,
\end{align*}
completing the proof. \qed

\end{document}